\newtheorem{theorem}{Theorem}[section]
\newtheorem{lemma}[theorem]{Lemma}
\newtheorem{proposition}[theorem]{Proposition}
\theoremstyle{definition}
\theoremstyle{remark}
\newtheorem*{remark}{Remark}
\newtheorem*{acknowledgements}{Acknowledgements}
\newtheorem*{organisation}{Organisation}
\newcommand{\agl}[1]{\langle#1\rangle}
\newcommand{\C}{\mathcal{C}}
\numberwithin{equation}{section}
\def\R{{\mathbb R}}
\newcommand{\diffp}{D_+^{\beta_{+}}}
\newcommand{\diffn}{D_-^{\beta_{-}}}
\newcommand{\lqr}{{L^q_tL^r_x}}
\newcommand{\supp}{\text{ supp }}
\author{Neal Bez}
\address[Neal Bez]{Department of Mathematics, Graduate School of Science and Engineering,
Saitama University, Saitama 338-8570, Japan}
\email{nealbez@mail.saitama-u.ac.jp}
\author{Jayson Cunanan}
\address[Jayson Cunanan]{Department of Mathematics, Graduate School of Science and Engineering,
Saitama University, Saitama 338-8570, Japan}
\email{jcunanan@mail.saitama-u.ac.jp}
\author{Sanghyuk Lee}
\address[Sanghyuk Lee]{Department of Mathematical Sciences, Seoul National University, Seoul 151-747, Korea}
\email{shklee@snu.ac.kr}
\keywords{Kinetic transport equation, averaging lemmas, hyperbolic Sobolev spaces, cone multiplier operator}
\begin{document}

\begin{abstract}
We prove smoothing estimates for velocity averages of the kinetic transport equation in hyperbolic Sobolev spaces at the critical regularity, leading to a complete characterisation of the allowable regularity exponents. Such estimates will be deduced from some mixed-norm estimates for the cone multiplier operator at a certain critical index. Our argument is not particular to the geometry of the cone and we illustrate this by establishing analogous estimates for the paraboloid.
\end{abstract}

\date{\today}

\title[Smoothing estimates at the critical regularity]{Smoothing estimates for the kinetic transport equation at the critical regularity}
\maketitle 

\section{Introduction}

For solutions $F(x,v,t)$ of the kinetic transport equation $(\partial _t +v\cdot\nabla)F = G$, estimates which capture of the positive smoothing effect of the velocity average $\int_{\mathbb{R}^d} F(x,v,t) \, \mathrm{d}\mu(v)$ traces back at least to the work of Golse--Perthame--Sentis \cite{GPS} and Golse--Lions--Perthame--Sentis \cite{GLPS}. Here, $\mu$ is a suitable measure on the space of velocities; the unit sphere $\mathbb{S}^{d-1}$ and the unit ball $\mathbb{B}^{d-1}$ are of special interest for physical reasons and so we focus our attention on these cases in this paper. The literature on such \emph{averaging lemmas} has grown significantly with regular important developments, including \cite{BBGL}, \cite{Bezard}, \cite{Bouchut2002}, \cite{BDes}, \cite{BGRevista}, \cite{BP}, \cite{DeVP}, \cite{DLM}, \cite{Gerard}, \cite{GPR}, \cite{JabinVegaParis}, \cite{JabinVegaJMPA}, \cite{Lions}, \cite{LionsII}. The reader is encouraged to look at the expositions by Bouchut \cite{Bouchut} and Perthame \cite{PerthameBAMS} for greater detail concerning the motivation for averaging lemmas, for instance, to derive information about solutions of more elaborate kinetic equations.

In the present work, we are interested in velocity averages
\begin{equation*}
\rho f(x,t)=\int_{\mathbb{R}^d}f(x-tv,v) \,\mathrm{d}\mu(v),\quad (x,t) \in \mathbb{R}^{d+1},
\end{equation*}
of the solution $F(x,v,t) = f(x-tv,v)$ of the homogeneous initial value problem $(\partial _t +v\cdot\nabla)F=0$ with initial data $F(x,v,0)=f(x,v)$. Here the averaging is taken over the unit sphere or ball equipped with their usual induced Lebesgue measures, and we write $\rho = \rho_\textrm{s}$ or $\rho = \rho_\mathrm{b}$, respectively, for these averages. We shall see later that there is a natural way to unify the smoothing estimates we seek for these velocity domains, so for the sake of simplicity of the exposition, we focus this introductory discussion on the unit sphere. As proved in the work of Bournaveas and Perthame \cite{BP}, when $d=3$, a half derivative gain is achievable in the sense of classical Sobolev spaces. When $d=2$, they also observed such a gain is not possible and established a natural replacement through the use of so-called \emph{hyperbolic derivatives}. More precisely, it was shown in \cite{BP} that
\begin{equation} \label{e:smoothingL^2}
\| \diffp\diffn  \rho_\textrm{s} f\|_{L^2_{t,x}}\leq C\|f\|_{L^2_{x,v}}
\end{equation}
holds when $(d,\beta_+,\beta_-) = (3,\frac{1}{2},0)$ and $(d,\beta_+,\beta_-) = (2,\frac{1}{4},\frac{1}{4})$. Here, $D_+^{\beta_+}$ and $D_-^{\beta_-}$ are Fourier multiplier operators with respective multipliers $(|\xi| + |\tau|)^{\beta_+}$ and $||\xi| - |\tau||^{\beta_-}$, corresponding to classical fractional derivatives and hyperbolic derivatives. This was extended to general dimensions $d \geq 2$ with $(\beta_+,\beta_-) = (\frac{d-1}{4},\frac{3-d}{4})$ by Bournaveas and Guti\'errez in \cite{BGRevista}, and it turns out that this is sharp in the sense that \eqref{e:smoothingL^2} fails for $\beta_- < \frac{3-d}{4}$ (equivalently, \eqref{e:smoothingL^2} fails for $\beta_+ > \frac{d-1}{4}$, since the scaling condition $\beta_+ + \beta_- = \frac{1}{2}$ is necessary for \eqref{e:smoothingL^2}).

In very recent work \cite{BBGL}, the purely $L^2$-based results in \cite{BP} and \cite{BGRevista} were significantly extended to estimates of the form
\begin{equation} \label{e:smoothing}
\| \diffp\diffn  \rho f\|_{L^q_{t}L^r_x}\leq C\|f\|_{L^2_{x,v}}
\end{equation}
for general $q,r \in [2,\infty)$ and where $\rho$ is either $\rho_\mathrm{s}$ or $\rho_\mathrm{b}$. In this case, the scaling condition $\beta_+ + \beta_- = \frac{d}{r} + \frac{1}{q} - \frac{d}{2}$ is necessary for \eqref{e:smoothing}, and examples in \cite{BBGL} show that
$
\beta_- \geq \frac{1}{q} + \frac{d-1}{2r} - \frac{d-1}{2}$ 
and
$
\beta_- > \frac{1-d}{4}
$
are also necessary conditions. Furthermore, it was shown in \cite{BBGL} that \eqref{e:smoothing} holds if $\beta_- > \beta^*_-$, where
\[
\beta^*_- = \max\bigg\{ \frac{1}{q} + \frac{d-1}{2r} - \frac{d-1}{2}, -\frac{d-1}{4}\bigg\},
\]
which left open the question of whether \eqref{e:smoothing} holds in the critical case $\beta_- = \beta_-^*$ for $q, r \in [2,\infty)$ such that $\frac{1}{q} > \frac{d-1}{2}(\frac{1}{2} - \frac{1}{r})$. The purpose of the present paper is to resolve this issue and thus establish the following complete characterisation of the exponents for which \eqref{e:smoothing} holds. 
\begin{theorem}[Unit sphere] \label{t:mainsphere}
Suppose $d \geq 2$, $q,r \in [2,\infty)$ and $\beta_+ + \beta_- = \frac{d}{r} + \frac{1}{q} - \frac{d}{2}$. Let $\rho_\mathrm{s}$ be the velocity averaging operator given by
\[
\rho_\mathrm{s} f(x,t)=\int_{\mathbb{S}^{d-1}}f(x-tv,v) \,\mathrm{d}\sigma(v),
\]
where $\sigma$ denotes the usual induced Lebesgue measure on the unit sphere $\mathbb{S}^{d-1}$.
\begin{enumerate}
\item Suppose $\frac{1}{q} \leq \frac{d-1}{2}(\frac{1}{2} - \frac{1}{r})$. Then \eqref{e:smoothing} holds if and only if $\beta_- > \beta^*_-$.
\item Suppose $\frac{1}{q} > \frac{d-1}{2}(\frac{1}{2} - \frac{1}{r})$. Then \eqref{e:smoothing} holds if and only if $\beta_- \geq \beta^*_-$.
\end{enumerate}
\end{theorem}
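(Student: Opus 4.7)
The necessity of $\beta_- \geq \beta_-^*$ in both cases, and the sufficiency of $\beta_- > \beta_-^*$, are already in \cite{BBGL}. The new content is therefore twofold: (a) the \emph{failure} of \eqref{e:smoothing} at the endpoint $\beta_- = \beta_-^*$ when $\frac{1}{q} \leq \frac{d-1}{2}(\frac{1}{2}-\frac{1}{r})$, and (b) the \emph{validity} of \eqref{e:smoothing} at that same endpoint when $\frac{1}{q} > \frac{d-1}{2}(\frac{1}{2}-\frac{1}{r})$. Part (a) I would attack by a randomised Knapp construction: superpose with $\pm 1$ signs a family of Knapp extremisers adapted to $\delta$-caps of $\mathbb{S}^{d-1}$ across many dyadic scales $\delta$, and apply Khintchine's inequality to extract a logarithmic divergence on the critical line; the near-saturation of the scaling inequality by each Knapp example guarantees the right order of growth.

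The core of the argument is part (b), which I would reduce to a sharp mixed-norm cone multiplier bound at a critical index. A direct computation gives
$$\widehat{\rho_\mathrm{s} f}(\xi,\tau) = c\int_{\mathbb{S}^{d-1}} \delta(\tau + v\cdot\xi)\, \widehat{f}(\xi,v)\, d\sigma(v),$$
so $\widehat{\rho_\mathrm{s} f}$ is supported in the solid light cone $\{|\tau|\leq|\xi|\}$, with its density concentrated on the surface $|\tau|=|\xi|$. Decompose dyadically by distance to the cone: let $P_j$ cut out the shell $||\xi|-|\tau|| \sim 2^{-j}|\xi|$. On this shell the symbol of $\diffn$ is comparable to $(2^{-j}|\xi|)^{\beta_-}$ and that of $\diffp$ to $|\xi|^{\beta_+}$, so the estimate reduces to proving
$$\|\diffp P_j \rho_\mathrm{s} f\|_{\lqr} \lesssim 2^{j\beta_-}\|f\|_{L^2_{x,v}}$$
uniformly in $j \geq 0$, with the dyadic pieces recombined via Littlewood--Paley and the almost-orthogonality of the shells in the hyperbolic direction. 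A Lorentz rescaling flattens each $2^{-j}$-shell to a unit-scale neighbourhood of the cone, turning every dyadic piece into the same model operator $T$ built from a unit-scale cone multiplier. What remains is to establish a sharp $L^2 \to \lqr$ bound for $T$ at the critical scaling index; the hypothesis $\frac{1}{q} > \frac{d-1}{2}(\frac{1}{2}-\frac{1}{r})$ places $(q,r)$ exactly in the range where such a bound is expected with no $\varepsilon$-loss.

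The main obstacle is precisely this endpoint cone multiplier bound. The argument of \cite{BBGL} carries an $\varepsilon$-loss which becomes fatal at $\beta_- = \beta_-^*$, since then the $j$-sum would diverge logarithmically. To eliminate the loss I would use a square-function decomposition adapted to angular sectors on the cone, feeding in local smoothing / Strichartz inputs for the wave equation, and then interpolate analytically against the trivial Plancherel bound so as to land exactly at the critical index without any logarithmic factor. As the abstract signals, the argument is not specific to the cone, so the same scheme — with the paraboloid replacing the light cone — should handle the analogous smoothing estimates at the critical regularity.
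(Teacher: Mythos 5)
Your overall framing reduces the theorem to a critical cone multiplier bound, which is indeed what the paper does (via \cite[Theorem 4.3]{BBGL}, restated here as Theorem \ref{t:coneequiv}), and your dyadic decomposition by distance to the light cone together with a Lorentz rescaling is the standard first move. However, there are two substantive problems.

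First, a misidentification of what is new. The failure of \eqref{e:smoothing} at $\beta_-=\beta_-^*$ when $\frac{1}{q}\le\frac{d-1}{2}(\frac12-\frac1r)$ (equivalently when $\beta_-^*=-\frac{d-1}{4}$) is not a new result of this paper: \cite{BBGL} already established the strict necessary condition $\beta_->-\frac{d-1}{4}$, so part~(1) of the theorem follows directly. Your randomised Knapp construction addresses a question that is already closed; the only open issue resolved here is the validity of the endpoint in part~(2).

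Second, and more seriously, your mechanism for avoiding the logarithmic loss does not work. After the dyadic decomposition you want to sum $\sum_j 2^{-j\alpha}\mathcal{C}_j$, and at $\alpha=\alpha^*$ each normalised piece contributes a bound of the \emph{same} size. Neither Littlewood--Paley theory nor almost-orthogonality resolves this: almost-orthogonality gives $\ell^2$-summability, which is useless when the pieces are all comparable, and a Littlewood--Paley square function is not available in the mixed-norm range $q\ne r$ in any case. Worse, your proposed cure --- ``interpolate analytically against the trivial Plancherel bound so as to land exactly at the critical index without any logarithmic factor'' --- cannot work: complex (analytic) interpolation between two families, one of which carries an $\varepsilon$-loss, reproduces a loss at every interior point; it does not convert $\ell^\infty$-weighted information about the sequence $\{\langle\mathcal{C}_k f,g\rangle\}_k$ into the $\ell^1$-weighted bound that summing the dyadic pieces requires. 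The paper's whole point is that one must use \emph{real} interpolation in a way that mixes the dyadic scales: for $r=2$ a restricted weak-type estimate is extracted by a Chebyshev argument splitting the $k$-sum at an optimised threshold $N$ (so that the two halves genuinely see different Lebesgue exponents), and for $q=2$ a bilinear real interpolation \`a la Keel--Tao (Lemma \ref{l:LP}) upgrades the family of $\ell^\infty_\beta$ bounds coming from \eqref{e:Cdeltabilinear} to an $\ell^1_\beta$ bound. That $\ell^\infty\to\ell^1$ upgrade is exactly the summability you are missing, and it is precisely what your ``square function plus local smoothing plus analytic interpolation'' scheme cannot supply. Finally, the square-function/bilinear-restriction route you gesture at is closer to Lee's \cite{Lee}, which the paper explicitly sets aside because it does not readily produce the mixed-norm estimates $q\ne r$ that are the content of Theorem \ref{t:coneestimates}.
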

For the unit ball, the analogous statement is the following.
\begin{theorem}[Unit ball] \label{t:mainball}
Suppose $d \geq 2$, $q,r \in [2,\infty)$ and $\beta_+ + \beta_- = \frac{d}{r} + \frac{1}{q} - \frac{d}{2}$. Let $\rho_\mathrm{b}$ be the velocity averaging operator given by
\[
\rho_\mathrm{b} f(x,t)=\int_{\mathbb{B}^{d-1}}f(x-tv,v) \,\mathrm{d}v.
\]
\begin{enumerate}
\item Suppose $\frac{1}{q} \leq \frac{d-1}{2}(\frac{1}{2} - \frac{1}{r})$. Then \eqref{e:smoothing} holds if and only if $\beta_- > \beta^*_-	-\frac{1}{2}$.
\item Suppose $\frac{1}{q} > \frac{d-1}{2}(\frac{1}{2} - \frac{1}{r})$. Then \eqref{e:smoothing} holds if and only if $\beta_- \geq \beta^*_--\frac{1}{2}$.
\end{enumerate}
\end{theorem}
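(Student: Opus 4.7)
The plan is to deduce Theorem~\ref{t:mainball} from Theorem~\ref{t:mainsphere} through the polar foliation of the unit ball. Writing $v = u\omega$ with $u \in (0,1]$, $\omega \in \mathbb{S}^{d-1}$, and $f_u(x,\omega) := f(x, u\omega)$, one starts from the identity
\[
\rho_\mathrm{b} f(x,t) = \int_0^1 u^{d-1}\, [\rho_\mathrm{s} f_u](x, ut) \, du.
\]
The task is then to extract the $\tfrac{1}{2}$-derivative gain in the $D_-$ direction from the radial integration in $u$.

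For the necessity of the conditions in both cases (1) and (2), I would adapt the Knapp-type counterexamples of \cite{BBGL} by additionally concentrating $f$ in a thin radial layer $\{1 - \delta < |v| < 1\}$ near the boundary of the ball; this contributes an extra factor $\sqrt{\delta}$ in the Fourier-side computation, corresponding precisely to the $\tfrac{1}{2}$ shift from $\beta_-^*$ to $\beta_-^* - \tfrac{1}{2}$. The distinction between strict and non-strict inequality in (1) versus (2) is inherited from the sphere case.

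For the sufficiency, a Littlewood--Paley decomposition in $|v|$ combined with the homogeneity $v \mapsto \lambda v$, $t \mapsto \lambda^{-1} t$ reduces to $|v| \in [\tfrac{1}{2}, 1]$. The auxiliary operator $\rho_\mathrm{s}^{(u)} f(x,t) := \int_{\mathbb{S}^{d-1}} f(x-tu\omega, u\omega)\, d\sigma(\omega)$ has Fourier transform supported in the sub-cone $\{|\tau| \leq u|\xi|\}$, so $||\xi|-|\tau|| \gtrsim (1-u)|\xi|$ on this support. A dyadic decomposition of the Fourier support into pieces $||\xi|-|\tau|| \sim 2^{-j}|\xi|$ pairs naturally with the dyadic decomposition $(1-u) \sim 2^{-j}$: on each such piece the ball-critical multiplier $D_+^{\beta_+} D_-^{\beta_-}$ exceeds its sphere-critical counterpart by exactly a factor $2^{j/2}$, and this is balanced by the $\sqrt{2^{-j}}$ gain from Cauchy--Schwarz over a $u$-interval of length $\sim 2^{-j}$. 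The summation over $j$ closes via square-function estimates (valid for $q,r \in [2,\infty)$) combined with $L^2$ orthogonality in the radial variable of $v$.

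The main obstacle is making this summation rigorous in the mixed $L^q_t L^r_x$ norm at the critical endpoint. A naive Minkowski inequality in $u$ produces a non-integrable weight $(1-u)^{-1}$, and the resolution requires combining two distinct sources of orthogonality: the Littlewood--Paley decomposition in the hyperbolic $|\xi|-|\tau|$ direction, and the $L^2$ orthogonality in the radial direction $u$ of the velocity. This is where the critical-endpoint form of Theorem~\ref{t:mainsphere}---and, more fundamentally, the underlying mixed-norm cone multiplier estimate established in the paper---is essential for accessing the critical-endpoint case (2) of Theorem~\ref{t:mainball}.
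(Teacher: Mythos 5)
Your approach is genuinely different from the paper's, and as written it contains a gap that is acknowledged but not closed. The paper does not deduce Theorem~\ref{t:mainball} from Theorem~\ref{t:mainsphere}. Instead, it introduces the one-parameter family of velocity measures $\frac{1}{\Gamma(1+\kappa)}(1-|v|^2)^\kappa_+$, $\kappa\in[-1,0]$, which interpolates between the sphere ($\kappa=-1$, via analytic continuation) and the ball ($\kappa=0$). Theorem~\ref{t:mainunified} covers all these cases at once, and via the equivalence in Theorem~\ref{t:coneequiv} (from \cite{BBGL}) it is reduced to the single cone multiplier statement, Theorem~\ref{t:coneestimates}: the estimate \eqref{e:smoothing} for $\rho_\kappa$ holds exactly when $\mathcal{C}^{\beta_-+\frac{\kappa}{2}+\frac{d-1}{4}}$ maps $L^2_{t,x}\to L^q_tL^r_x$. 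The ball and sphere cases thus follow from the \emph{same} critical cone multiplier estimate, simply evaluated at different orders $\alpha$; the $\frac12$ shift between $\beta_-^*$ and $\beta_-^*-\frac12$ is nothing more than the shift $\kappa\mapsto\kappa+1$ in the multiplier order $\beta_-+\frac{\kappa}{2}+\frac{d-1}{4}$. In particular, the ball case is no harder than the sphere case in the paper's framework; there is no separate radial integration to control.

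The gap in your argument is exactly where you flag it. After the dyadic matching of $(1-u)\sim 2^{-j}$ with $||\xi|-|\tau||\sim 2^{-j}$, the $2^{j/2}$ loss from the multiplier and the $2^{-j/2}$ Cauchy--Schwarz gain over the $u$-interval cancel, but what remains after applying Theorem~\ref{t:mainsphere} to each $u$-slice is a quantity of the schematic form $\big(\sum_j \int_{1-C2^{-j}}^{1}\|g_u\|_{L^2}^2\,\mathrm{d}u\big)^{1/2}$. Summing the indicator of $\{1-u\le C2^{-j}\}$ over $j$ produces the weight $\log\frac{1}{1-u}$ against $\|g_u\|_{L^2}^2\,\mathrm{d}u$, which cannot be controlled by $\|f\|_{L^2_{x,v}}^2\sim\int_0^1 u^{d-1}\|g_u\|^2_{L^2}\,\mathrm{d}u$. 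This logarithmic divergence is precisely the reason you cannot deduce the \emph{critical} ball estimate from the critical sphere estimate by a black-box foliation argument: the endpoint case has no slack to absorb a log. You acknowledge this with ``combining two distinct sources of orthogonality,'' but no concrete mechanism is given, and your closing sentence --- that the underlying mixed-norm cone multiplier estimate is essential --- effectively concedes that the resolution must go through Theorem~\ref{t:coneestimates} directly. Once one accepts that, the polar foliation is superfluous: one should apply Theorem~\ref{t:coneequiv} with $\kappa=0$ and invoke Theorem~\ref{t:coneestimates}, which is what the paper does. Your sketch of the necessity direction (thin radial layer near $|v|=1$ contributing $\sqrt{\delta}$) is plausible and in the spirit of the examples in \cite{BBGL}, but the sufficiency route needs to be replaced.
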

We clarify that in the region $\frac{1}{q} > \frac{d-1}{2}(\frac{1}{2} - \frac{1}{r})$ where we make our contribution in the present paper, in the pure-norm case $q = r$, it has been observed elsewhere that the estimates \eqref{e:smoothing} hold in the critical case $\beta_- = \beta_-^*$. As already noted above, when $(q,r) = (2,2)$, this can be found in \cite{BP} in two spatial dimensions and \cite{BP} in general. Also, it was noted in \cite{BBGL} that \eqref{e:smoothing} holds in the critical case $\beta_- = \beta_-^*$ for general $q=r$ by utilising the cone multiplier estimates in \cite{Lee}. Such cone multiplier estimates ultimately relied upon the bilinear theory for the Fourier restriction problem. Here, our arguments are substantially different and rely on linear Fourier restriction theory combined with bilinear interpolation in the spirit of the Keel--Tao argument in \cite{KeelTao}; the advantage of the approach in the current paper is that it readily handles the mixed-norm case $q \neq r$. 

Our arguments in handling the critical estimates for the cone multiplier will be sufficiently flexible so that they may be applied in related contexts. As a tangible example, we also establish the corresponding estimates for the paraboloid at the end of the paper.

\begin{organisation}
In the subsequent section, we state a theorem which unifies Theorems \ref{t:mainsphere} and \ref{t:mainball} along with the equivalent mixed-norm estimates for the cone multiplier operator. In Section \ref{section:cone} we prove the required estimates on the cone multiplier operator in the critical case, and finally in Section \ref{section:schrodinger} we show how our methods yield analogous estimates for the paraboloid.
\end{organisation}

\section{A unified theorem and connection with the cone multiplier operator}

Our proof of the critical case in both Theorems \ref{t:mainsphere} and \ref{t:mainball} are implied by the \emph{same} $L^2_{t,x} \to L^q_tL^r_x$ estimates for the cone multiplier operator, and thus it is natural to first present the following unified theorem.
\begin{theorem}[Unified] \label{t:mainunified}
Suppose $d \geq 2$, $q,r \in [2,\infty)$ and $\beta_+ + \beta_- = \frac{d}{r} + \frac{1}{q} - \frac{d}{2}$. Let $\rho_\kappa$ be the velocity averaging operator given by
\[
\rho_\kappa f(x,t) = \frac{1}{\Gamma(1+\kappa)} \int_{\mathbb{B}^{d-1}}f(x-tv,v) \, (1-|v|^2)^{\kappa} \, \mathrm{d}v
\]
for $\kappa \in [-1,0]$. 
\begin{enumerate}
\item Suppose $\frac{1}{q} \leq \frac{d-1}{2}(\frac{1}{2} - \frac{1}{r})$. Then \eqref{e:smoothing} holds if and only if $\beta_- > \beta^*_-(\kappa)$.
\item Suppose $\frac{1}{q} > \frac{d-1}{2}(\frac{1}{2} - \frac{1}{r})$. Then \eqref{e:smoothing} holds if and only if $\beta_- \geq \beta^*_-(\kappa)$.
\end{enumerate}
Here, 
\[
\beta^*_-(\kappa) = \max\bigg\{ \frac{1}{q} + \frac{d-1}{2r} - \frac{d + \kappa}{2}, -\frac{d + 1 + 2\kappa}{4} \bigg\}.
\]
\end{theorem}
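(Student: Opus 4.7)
The scaling relation $\beta_++\beta_-=d/r+1/q-d/2$ is immediate by dilation. The bound $\beta_-\geq \tfrac{1}{q}+\tfrac{d-1}{2r}-\tfrac{d+\kappa}{2}$ arises from a Knapp-type cap concentrated near a point on the unit sphere in $v$, where the weight $(1-|v|^2)^\kappa$ is active and contributes a factor depending on the cap scale. The bound $\beta_-\geq -\tfrac{d+1+2\kappa}{4}$ arises from a dispersive example exploiting the slowest decay of cone extensions with this weight. Strict inequalities in part (1) are then obtained by refining these examples to produce a logarithmic blow-up at the critical index, as in \cite{BBGL}.

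\textbf{Reduction to a cone multiplier estimate.} Taking the partial Fourier transform in $x$ gives
\[
\mathcal{F}_x(\rho_\kappa f)(\xi,t)=\frac{1}{\Gamma(1+\kappa)}\int_{|v|\le 1}\hat{f}(\xi,v)\,e^{-2\pi i tv\cdot\xi}(1-|v|^2)^{\kappa}\,dv,
\]
and a further Fourier transform in $t$ concentrates the output on the cone $\tau=-v\cdot\xi$. After an orthogonal change of variables in $v$ aligned with $\hat\xi$, the operator $\rho_\kappa$ becomes, up to harmless smooth factors, an extension operator from the light cone $|\tau|\le |\xi|$, and the weight $(1-|v|^2)^\kappa$ transforms into a power of $(|\xi|^2-\tau^2)/|\xi|^2$, hence of the hyperbolic distance $||\xi|-|\tau||$. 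By duality and Plancherel, \eqref{e:smoothing} is thus equivalent to a mixed-norm $L^2\to L^q_tL^r_x$ bound for the cone multiplier with symbol $(|\xi|+|\tau|)^{\beta_+}||\xi|-|\tau||^{\beta_-+\kappa}$ applied to a cone-adapted truncation. A dyadic decomposition in $|\xi|$ and rescaling then reduce everything to a single critical cone multiplier estimate whose index is exactly $\beta_-^*(\kappa)$; this is the unifying mechanism by which the three velocity-averaging problems (sphere, ball, interpolated) all see the same cone estimate.

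\textbf{Sufficiency.} The sub-critical range $\beta_->\beta_-^*(\kappa)$ is covered by \cite{BBGL} through precisely this reduction, so the only new content is the critical case $\beta_-=\beta_-^*(\kappa)$ in part~(2). This will be deduced from the endpoint cone multiplier estimates proved in Section \ref{section:cone}: the plan there is to decompose dyadically in the distance to the cone and sum the resulting pieces via a Keel--Tao style bilinear real interpolation argument, using only linear Fourier restriction (Strichartz estimates) for the cone. This avoids the bilinear restriction machinery that \cite{Lee} used for the pure-norm case $q=r$ and treats the mixed-norm case $q\neq r$ on the same footing. The main obstacle is precisely the logarithmic divergence that a naive dyadic summation would produce at the endpoint; its absorption via the bilinear interpolation is the technical heart of the proof, and once the endpoint cone multiplier estimate is in hand the passage back to \eqref{e:smoothing} is essentially formal through the reduction above.
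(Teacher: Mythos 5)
Your proposal correctly identifies the overall architecture of the argument: (i) cite the necessity examples from \cite{BBGL}; (ii) reduce \eqref{e:smoothing} to a mixed-norm $L^2\to L^q_tL^r_x$ bound for a cone multiplier supported where $|\xi|\sim 1$ (this is Theorem~\ref{t:coneequiv}, quoted from \cite{BBGL}); (iii) observe that the required cone exponent is $\kappa$-independent, so all $\kappa\in[-1,0]$ reduce to the same critical estimate; and (iv) prove the critical cone estimate by dyadic decomposition in distance to the cone plus Strichartz-based piece bounds, with the endpoint summation absorbed by real interpolation. However there are two concrete gaps.

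First, the exponent in your reduction step is wrong. You state that the relevant multiplier carries the factor $\bigl||\xi|-|\tau|\bigr|^{\beta_-+\kappa}$, but the correct order of the cone multiplier from Theorem~\ref{t:coneequiv} is $\alpha = \beta_- + \tfrac{\kappa}{2} + \tfrac{d-1}{4}$: the $L^2_v$ integration contributes the $\tfrac{d-1}{4}$ and the weight contributes only $\tfrac{\kappa}{2}$ (not $\kappa$). This distinction is exactly what makes the unification work, since one checks directly that $\beta_-^*(\kappa) + \tfrac{\kappa}{2} + \tfrac{d-1}{4} = \alpha^*(q,r)$ is independent of $\kappa$, whereas with your exponent the alleged critical cone order would depend on $\kappa$ and one would not obtain a single unifying cone estimate. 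You should also not say the critical cone index is ``exactly $\beta_-^*(\kappa)$''; it is $\alpha^*(q,r)$.

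Second, your sufficiency plan describes only the Keel--Tao style \emph{bilinear} real interpolation. That argument covers the axis $q=2$ (and, for $d=2$, the line $q=4$), but it does not give the $r=2$ axis, which requires a genuinely different (and simpler) mechanism: a temporal localisation argument yielding $\|\C_\delta g\|_{L^q_tL^2_x} \lesssim \delta^{-(1/q - 1/s)}\|g\|_{L^s_tL^2_x}$ for $q\ge s\ge 2$, then a Chebyshev/triangle-inequality passage to restricted weak-type estimates, and finally real interpolation of the restricted weak-type family. Moreover the full region $\tfrac{1}{q} > \tfrac{d-1}{2}(\tfrac12-\tfrac1r)$ is swept out only after complex interpolation between the $r=2$ and $q=2$ families (and for $d=2$ also $q=4$). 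A proposal resting solely on the bilinear argument would therefore fail to produce the estimates near the $r=2$ axis.
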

\begin{remark}
The distribution $ \frac{1}{\Gamma(1+\kappa)} (1-|v|^2)^{\kappa}_+$ is defined for $\kappa \leq -1$ via analytic continuation; when $\kappa = -1$ this distribution coincides with $\frac{1}{2}\mathrm{d}\sigma$, and therefore $\rho_{-1} = \frac{1}{2}\rho_\mathrm{s}$. Also, $\beta_-^*(-1)$ coincides with $\beta_-^*$ introduced in Section 1. Thus, Theorem 2.1 clearly contains both Theorem \ref{t:mainsphere} and Theorem \ref{t:mainball} by considering $\kappa = -1,0$.
\end{remark}
To prove Theorem \ref{t:mainunified}, we shall need the cone multiplier operator $\mathcal{C}^\alpha$ of order $\alpha$ defined by
\[
\mathcal{F}(\mathcal{C}^\alpha g)(\xi,\tau) = \bigg(1-\frac{\tau^2}{|\xi|^2}\bigg)^\alpha_+\phi(|\xi|) \widehat{g}(\xi,\tau).
\]
Here, $\phi \in C^\infty_c(\mathbb{R})$ is supported in $[\frac{1}{2},2]$ and $\alpha > -1$, and for appropriate functions $g : \mathbb{R}^{n} \to \mathbb{C}$, $n \geq 1$, we use the Fourier transform given by
\[
\widehat{g}(\xi) = \mathcal{F}g(\xi) = \int_{\mathbb{R}^n} e^{-ix\cdot\xi} g(x) \, \mathrm{d}x.
\]
Given the following result, this definition is natural for the purposes of the connection with estimates of the form \eqref{e:smoothing}, and we refer the reader to \cite{BBGL} for further discussion on how this operator mildly differs from the more standard cone multiplier.
\begin{theorem}[\cite{BBGL}]\label{t:coneequiv}
Suppose $d \geq 2$, $q,r\in [2,\infty)$, $\kappa \in [-1,0]$ and $\beta_+ + \beta_- = \frac{d}{r} + \frac{1}{q} - \frac{d}{2}$. Then 
\begin{equation} \label{e:gensmoothing}
\| \diffp \diffn  \rho_\kappa f\|_{L^q_tL_x^r}\leq C\|f\|_{L^2_{x,v}}
\end{equation}
holds if and only if the cone multiplier operator $\mathcal{C}^{\beta_- + \frac{\kappa}{2} + \frac{d-1}{4}}$ is bounded from $L^2_{t,x}$ to $L^q_tL^r_x$.
\end{theorem}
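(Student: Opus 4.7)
The strategy is to realise both sides of the equivalence as mapping properties of Fourier multipliers on space-time and match them through $TT^{*}$. First, I would directly compute the space-time Fourier transform of $\rho_{\kappa}f$. Since $f(x-tv,v)$ carries the exponential $e^{-itv\cdot\xi}$ in the $x$-variable, a further Fourier transform in $t$ produces a delta function $\delta(\tau+v\cdot\xi)$; this collapses the $v$-integral over $\mathbb{B}^{d-1}$ onto the slice $\{v\cdot\xi=-\tau\}\cap\mathbb{B}^{d-1}$, a $(d-2)$-dimensional ball of radius $\sqrt{1-\tau^{2}/|\xi|^{2}}$. The resulting expression is supported on the solid cone $\{|\tau|\le|\xi|\}$ and picks up a Jacobian factor $|\xi|^{-1}$, making manifest the link with cone-type multipliers.

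Setting $T=\diffp\diffn\rho_{\kappa}$, the estimate \eqref{e:gensmoothing} is equivalent by duality to $\|TT^{*}\|_{L_{t}^{q'}L_{x}^{r'}\to L_{t}^{q}L_{x}^{r}}\lesssim 1$, and since $\diffp,\diffn$ are self-adjoint real multipliers, $TT^{*}=D_{+}^{2\beta_{+}}D_{-}^{2\beta_{-}}\rho_{\kappa}\rho_{\kappa}^{*}$. The same delta-function computation applied to the composition $\rho_{\kappa}\rho_{\kappa}^{*}$ identifies it as a Fourier multiplier on $(x,t)$ with symbol of the shape
\[
m_{\rho_{\kappa}\rho_{\kappa}^{*}}(\xi,\tau)\;\sim\;\frac{1}{|\xi|}\Bigl(1-\frac{\tau^{2}}{|\xi|^{2}}\Bigr)_{+}^{\frac{d-2}{2}+2\kappa},
\]
to be interpreted by analytic continuation in $\kappa$, in particular at the sphere endpoint $\kappa=-1$, where the symbol must reduce (up to the factor $1/4$ coming from $\rho_{-1}=\tfrac{1}{2}\rho_{\mathrm s}$) to $\int_{\mathbb{S}^{d-2}}\delta(v\cdot\xi+\tau)\,\mathrm{d}\sigma(v)$. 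Multiplying this by $(|\xi|+|\tau|)^{2\beta_{+}}||\xi|-|\tau||^{2\beta_{-}}$ and rewriting $1-\tau^{2}/|\xi|^{2}=(|\xi|-|\tau|)(|\xi|+|\tau|)/|\xi|^{2}$ exhibits $m_{TT^{*}}$ as a product of a power of $|\xi|$, a power of $(|\xi|+|\tau|)$, and a power of $||\xi|-|\tau||$.

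The final step is to identify this with the $TT^{*}$-form $m_{\mathcal{C}^{2\alpha}}=m_{\mathcal{C}^{\alpha}}\cdot m_{\mathcal{C}^{\alpha}}$ of the cone multiplier, noting that boundedness of $\mathcal{C}^{\alpha}:L_{t,x}^{2}\to L_{t}^{q}L_{x}^{r}$ is itself equivalent by $TT^{*}$ to boundedness of $\mathcal{C}^{2\alpha}:L_{t}^{q'}L_{x}^{r'}\to L_{t}^{q}L_{x}^{r}$. After a Littlewood--Paley decomposition $|\xi|\sim 2^{j}$, the factors in $|\xi|$ and $(|\xi|+|\tau|)$ reduce to $2^{j}$-multiples of constants, and the scaling identity $\beta_{+}+\beta_{-}=d/r+1/q-d/2$ is precisely what ensures that the dyadic pieces reassemble with no loss. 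Matching the power of $||\xi|-|\tau||$ between $m_{TT^{*}}$ and $m_{\mathcal{C}^{2\alpha}}$ then pins down $\alpha=\beta_{-}+\kappa/2+(d-1)/4$. The main obstacle lies in this last step: one must verify that pointwise comparability of the two symbols on each dyadic shell transfers to operator comparability in the mixed-norm topology $L_{t}^{q'}L_{x}^{r'}\to L_{t}^{q}L_{x}^{r}$, and that the analytic continuation in $\kappa$ used to reach the sphere endpoint is consistent at the operator level rather than merely formal.
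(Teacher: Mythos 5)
The paper does not prove Theorem \ref{t:coneequiv}; it refers to \cite[Theorem 4.3]{BBGL}. So I evaluate your proposal on its own merits against a reconstruction of the standard argument.

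Your high-level strategy (compute $\mathcal{F}_{x,t}(\rho_\kappa f)$ via the $\delta(\tau+v\cdot\xi)$ slicing, then relate to a multiplier with a $(1-\tau^2/|\xi|^2)$ power) is on the right track, but the symbol you write down for $\rho_\kappa\rho_\kappa^*$ is incorrect, and the error is not cosmetic: it changes the final value of $\alpha$.

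\textbf{The exponent is wrong in two ways.} The domain of integration in $\rho_\kappa$ is the solid unit ball $\{v\in\mathbb{R}^d:|v|\le 1\}$ (the ambient velocity space must be $\mathbb{R}^d$ for $v\cdot\nabla_x$ to make sense, and the notation $\mathbb{B}^{d-1}$ refers to the ball bounded by $\mathbb{S}^{d-1}$). The hyperplane slice $\{v\cdot\xi=-\tau\}$ of this $d$-dimensional ball is a $(d-1)$-dimensional ball of radius $a=\sqrt{1-\tau^2/|\xi|^2}$, not $(d-2)$-dimensional; this already replaces your $\tfrac{d-2}{2}$ by $\tfrac{d-1}{2}$. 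Second, the $L^2_{x,v}$ norm in \eqref{e:gensmoothing} must be taken against the measure $d\mu_\kappa(v)=\frac{(1-|v|^2)_+^\kappa}{\Gamma(1+\kappa)}\,dv$ (this is forced by consistency at $\kappa=-1$, where the measure degenerates to $\tfrac12 d\sigma$; an unweighted Lebesgue norm is meaningless there). With that inner product the adjoint $\rho_\kappa^*$ carries no extra weight, and $\rho_\kappa\rho_\kappa^*$ depends \emph{linearly}, not quadratically, on $\mu_\kappa$. The correct symbol is
\begin{equation*}
m_{\rho_\kappa\rho_\kappa^*}(\xi,\tau)
= 2\pi\int \delta(v\cdot\xi+\tau)\,d\mu_\kappa(v)
= \frac{c_{d,\kappa}}{|\xi|}\Bigl(1-\frac{\tau^2}{|\xi|^2}\Bigr)_+^{\kappa+\frac{d-1}{2}},
\end{equation*}
with exponent $\kappa+\tfrac{d-1}{2}$ rather than your $2\kappa+\tfrac{d-2}{2}$. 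Matching with $m_{\mathcal{C}^{2\alpha}}$ via $1-\tau^2/|\xi|^2=(|\xi|-|\tau|)(|\xi|+|\tau|)/|\xi|^2$ then gives $2\alpha=2\beta_-+\kappa+\tfrac{d-1}{2}$, i.e.\ $\alpha=\beta_-+\tfrac{\kappa}{2}+\tfrac{d-1}{4}$, exactly as in the theorem; your exponent would yield $\alpha=\beta_-+\kappa+\tfrac{d-2}{4}$, which is incorrect for every $\kappa\neq\tfrac12$. The same dimension slip occurs in your sanity check: the $\kappa=-1$ limit should be $\int_{\mathbb{S}^{d-1}}\delta(v\cdot\xi+\tau)\,d\sigma(v)\sim|\xi|^{-1}(1-\tau^2/|\xi|^2)^{(d-3)/2}$, not an integral over $\mathbb{S}^{d-2}$. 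Note also that with the correct (weighted) adjoint the analytic continuation in $\kappa$ you flagged as a potential obstacle becomes a non-issue: the symbol is a single power of the measure, not a square of a distribution.

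\textbf{The final comparison step needs more than pointwise symbol comparability.} You correctly flag this. For $L^2\to L^2$ pointwise comparison of symbols suffices, but boundedness $L^{q'}_tL^{r'}_x\to L^q_tL^r_x$ of a Fourier multiplier is not characterised by the size of its symbol, so ``$m_{TT^*}\sim m_{\mathcal{C}^{2\alpha}}$ on each dyadic shell'' does not immediately transfer to operator-norm comparability. The clean way this is handled in \cite{BBGL} is to avoid $TT^*$ altogether and instead factor the operator directly: starting from
\begin{equation*}
\mathcal{F}(\rho_\kappa f)(\xi,\tau)=\frac{C}{|\xi|}\int_{w\perp\xi,\,|w|<a}\frac{(a^2-|w|^2)^\kappa}{\Gamma(1+\kappa)}\hat f\bigl(\xi,-\tfrac{\tau\xi}{|\xi|^2}+w\bigr)\,dw,
\end{equation*}
one sets $\hat h(\xi,\tau)$ to be the $L^2(\nu_\kappa)$-normalised slice average and applies Cauchy--Schwarz against the same measure; this produces an exact factorisation $D_+^{\beta_+}D_-^{\beta_-}\rho_\kappa f = \mathcal{C}^\alpha(\text{something applied to }h)$ with $\|h\|_{L^2}\lesssim\|f\|_{L^2_{x,v}}$, giving the ``if'' direction directly from $L^2\to L^q_tL^r_x$ boundedness of $\mathcal{C}^\alpha$. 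The ``only if'' direction follows by choosing $\hat f(\xi,v)=\hat g(\xi,-v\cdot\xi)$, which is constant on each slice and makes Cauchy--Schwarz an equality. This avoids the need to compare multipliers in mixed-norm operator topology.

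In summary: the frequency-slicing computation is the right starting point, but you need (i) the correct slice dimension $d-1$, (ii) the $\mu_\kappa$-weighted $L^2_v$ adjoint giving $\kappa$ not $2\kappa$ in the exponent, and (iii) a direct factorisation through an auxiliary $L^2_{t,x}$ function rather than a multiplier comparison in mixed norms.
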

For details and a proof of Theorem \ref{t:coneequiv}, we refer the reader to \cite[Theorem 4.3]{BBGL}.

Given Theorem \ref{t:coneequiv}, it is clear that to prove Theorem \ref{t:mainunified} it suffices to prove the following.
\begin{theorem} \label{t:coneestimates}
Suppose $d \geq 2$ and $q,r\in [2,\infty)$.
\begin{enumerate}
\item Suppose $\frac{1}{q} \leq \frac{d-1}{2}(\frac{1}{2} - \frac{1}{r})$. Then $\mathcal{C}^{\alpha}$ is bounded from $L^2_{t,x}$ to $L^q_tL^r_x$ if and only if $\alpha > \alpha^*$.
\item Suppose $\frac{1}{q} > \frac{d-1}{2}(\frac{1}{2} - \frac{1}{r})$. Then $\mathcal{C}^{\alpha}$ is bounded from $L^2_{t,x}$ to $L^q_tL^r_x$ if and only if $\alpha \geq \alpha^*$.
\end{enumerate}
Here,
\begin{equation*}
\alpha^* = \alpha^*(q,r) = \max\bigg\{\frac{1}{q}+\frac{d-1}{2r}-\frac{d+1}{4}, -\frac{1}{2}\bigg\}.
\end{equation*}
\end{theorem}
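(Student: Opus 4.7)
The plan is to reduce $\mathcal{C}^{\alpha}$ to a single-scale analysis via Littlewood--Paley decomposition in the distance to the cone, and then to handle the endpoint in region (2) by a Keel--Tao style bilinear interpolation argument. Write $\mathcal{C}^{\alpha} = \sum_{j \geq 1} 2^{-2\alpha j} T_j$, where $T_j$ is a Fourier multiplier smoothly cut off to the set where $1 - \tau^{2}/|\xi|^{2} \sim 2^{-2j}$ and $|\xi| \sim 1$. Each $T_j$ can be realised as a Fourier extension operator over a $2^{-2j}$-thickened piece of the truncated light cone, and the linear Fourier restriction / Strichartz estimates for the cone, together with Plancherel in the transverse variable, yield the single-scale bound $\|T_j f\|_{L^{q}_{t}L^{r}_{x}} \lesssim 2^{2\alpha^{*} j}\|f\|_{L^{2}_{t,x}}$ for all $(q,r)$ in the range under consideration. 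Armed with this, triangle inequality produces sufficiency for all $\alpha > \alpha^{*}$ in both (1) and (2), recovering the results of \cite{BBGL}, while the necessity of $\alpha \geq \alpha^{*}$ together with the strict versus non-strict dichotomy follows from standard Knapp-cap and randomised-cap tested examples that are essentially present in the literature.

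The substantive content is thus sufficiency at the endpoint $\alpha = \alpha^{*}$ in region (2), where the direct summation over $j$ diverges. I would pass to the dual pairing $\langle \mathcal{C}^{\alpha^{*}} f, g\rangle = \sum_{j} 2^{-2\alpha^{*} j}\langle T_j f, g\rangle$ with $g \in L^{q'}_{t}L^{r'}_{x}$, apply an atomic decomposition $g = \sum_{k} g_{k}$ at dyadic time scales, and set up two single-scale bilinear estimates for $|\langle T_j f, g_{k}\rangle|$ with respective gains and losses in the scale gap $|j - k|$: one side uses a trivial Bernstein-type bound, while the other uses a Strichartz bound at a nearby interior exponent within the open sufficient range. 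Bilinear real interpolation then produces off-diagonal decay of the form $2^{-\varepsilon|j - k|}$, and a Schur-type summation over $j, k$ closes the bound. The hypothesis $\frac{1}{q} > \frac{d-1}{2}\bigl(\frac{1}{2} - \frac{1}{r}\bigr)$ enters here exactly in guaranteeing the slack inside the open Strichartz range needed for the interpolation to yield a genuine gain.

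The principal obstacle I expect is producing this off-diagonal bilinear estimate with the correct quantitative decay, uniformly across the mixed-norm pairs $(q,r)$ in region (2). In the pure-norm case $q=r$ the analogous endpoint was accessible via bilinear Fourier restriction, as in \cite{Lee}, but the mixed-norm setting disables that route because bilinear restriction estimates do not interact cleanly with the anisotropic $L^{q}_{t}L^{r}_{x}$ norms. The Keel--Tao style interpolation circumvents this by relying only on linear restriction together with atomic decomposition, at the cost of a careful choice of interpolation endpoints that must remain admissible throughout region (2).
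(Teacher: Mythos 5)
Your high-level outline---dyadic decomposition away from the cone, single-scale Strichartz bounds, triangle inequality above the critical exponent, and a Keel--Tao style argument at the endpoint---is the right skeleton, but the endpoint mechanism you describe has a genuine gap and misidentifies the interpolation step. The paper does not treat all $(q,r)$ in region (2) by one bilinear argument: it proves the critical estimate separately when $r=2$ (all $q\in[2,\infty]$), when $q=2$, and additionally when $q=4$, $r\geq 4$ in the case $d=2$, and only then covers the full region by complex interpolation. Your proposal gives no route to $r=2$, where there is no spatial Strichartz slack and no bilinear structure to exploit. The actual argument there establishes $\|\C_\delta g\|_{L^q_t L^2_x}\lesssim \delta^{-(\frac1q-\frac1s)}\|g\|_{L^s_t L^2_x}$ for all $2\le s\le q$ via temporal localisation of the kernel, derives a restricted weak-type bound for $\sum_k 2^{-k\alpha}\C_k$ by splitting the sum at a threshold $N$ and applying Chebyshev, and then upgrades to strong type by real interpolation; none of this is visible in your sketch.

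For the genuinely bilinear cases ($q=2$, and $q=4$ when $d=2$), the mechanism is not an atomic decomposition of $g$ into time scales $g_k$ with off-diagonal decay $2^{-\varepsilon|j-k|}$ and a Schur-type summation---there is only one dyadic index in the argument. One views $T=\{\langle \C_k\,\cdot,\cdot\rangle\}_k$ as a single vector-valued bilinear form, proves the family of single-scale bilinear estimates $|\langle \C_\delta f,g\rangle|\lesssim\delta^{-\beta(a,b)}\|f\|_{L^2_t L^{a'}_x}\|g\|_{L^2_t L^{b'}_x}$, equivalently $T\colon L^2_t L^{a'}_x\times L^2_t L^{b'}_x\to\ell^\infty_{\beta(a,b)}$, over a two-parameter family of \emph{spatial Lebesgue exponents} $(a,b)$, and applies bilinear real interpolation (Lions--Peetre / Bergh--L\"ofstr\"om). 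It is the variation of the weight $\beta(a,b)$ across this family that upgrades the target from $\ell^\infty$ to $\ell^1$, after which the dyadic sum converges absolutely; no Schur-type off-diagonal gain between two decompositions is invoked anywhere. Furthermore, the interpolation lands in the desired mixed-norm space only because the secondary Lorentz exponent can be matched to the temporal exponent ($2$ when $q=2$, $\tfrac43$ when $q=4$), which is exactly why the bilinear argument cannot be run at general $q$ and why the separate $r=2$ treatment and the final complex interpolation step are unavoidable.
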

In the subsequent section, we shall give a proof of the boundedness of $\mathcal{C}^{\alpha}$ from $L^2_{t,x}$ to $L^q_tL^r_x$ when $\alpha = \alpha^*$ and $\frac{1}{q} > \frac{d-1}{2}(\frac{1}{2} - \frac{1}{r})$; the remaining estimates can be found in \cite{BBGL}. We also refer the reader to \cite{BBGL} for further background and literature on the cone multiplier operator and discussion of its prominent role in contemporary euclidean harmonic analysis.

\section{Proof of Theorem \ref{t:coneestimates}: the critical case $\alpha = \alpha^*$} \label{section:cone}

We shall adopt the notation $A \lesssim B$ to signify the inequality $A \leq CB$, where $C$ is a constant which is permitted to depend only on $d$ and any Lebesgue space exponents which are under consideration, and $A \sim B$ means $A \lesssim B \lesssim A$.

Recall that we are only interested in region $\frac{1}{q} > \frac{d-1}{2}(\frac{1}{2} - \frac{1}{r})$, in which case
\[
\alpha^*(q,r) = \frac{1}{q}+\frac{d-1}{2r}-\frac{d+1}{4} = \frac{1}{q}-\frac{1}{2}-\frac{d-1}{2}\bigg(\frac{1}{2} - \frac{1}{r}\bigg) .
\]

\subsection{Outline} \label{subsection:outline}
Since the singularity in the multiplier of $\mathcal{C}^\alpha$ occurs on the conical surfaces $\tau = \pm |\xi|$, we begin in a standard manner by decomposing dyadically away from these surfaces. It will suffice to consider the upward conical region, and thus we introduce the multiplier operator $\mathcal{C}_{\delta},$ given by
\begin{equation} \label{e:Cdeltadefinition}
\mathcal{F}(\mathcal C_{\delta} g)(\xi,\tau) = \psi\left( \frac{|\xi|-\tau}{\delta}\right) \phi(|\xi|)  \widehat{g}(\xi,\tau)
\end{equation}
for $0 < \delta \ll 1$. Here, $\psi \in C^\infty_c(\mathbb{R})$ has support in $[\frac{1}{2}, 2]$ and satisfies
\begin{equation} \label{e:monodecomp}
s^{\alpha}=\sum_{k \in \mathbb{Z}} 2^{- k\alpha} \psi(2^{k} s)
\end{equation}
for all $s > 0$ (the existence of such a bump function is easily justified via the standard smooth partition of unity).

For $d\geq3,$ our proof proceeds by considering separately the cases $r=2$ and $q=2$ in the relevant region $\frac{1}{q} > \frac{d-1}{2}(\frac{1}{2} - \frac{1}{r})$; complex interpolation then gives all desired estimates in this region. The case $r=2$ is easier so we begin in this case. Here, we show that estimates of the form
\begin{equation*}
\|\C_{\delta}g\|_{L^q_tL^2_x}\lesssim \delta^{-(\frac{1}{q}-\frac{1}{s})}\|g\|_{L^s_tL^2_x},
\end{equation*}
hold for $q\geq s\geq2$, from which an interpolation argument yields weak-type estimates for $\C^\alpha$, where $\alpha=\alpha^*(q,2),$ and then real interpolation gives the desired strong-type estimates. For $q=2,$ we use a family of estimates of the form 
\begin{equation*}
|\agl{\C_{\delta}f,g}|	\lesssim \delta^{\frac{d-1}{2}{(\frac{1}{2}-\frac{1}{r'})}+\frac{d-1}{2}{(\frac{1}{2}-\frac{1}{s'})}}\|f\|_{L^2_tL^{r}_x}\|g\|_{L^2_tL^{s}_x},
\end{equation*}
for appropriate $r$ and $s$, combined with bilinear real interpolation; this argument was inspired by the Keel-Tao proof of the endpoint Strichartz estimates \cite{KeelTao}. When $d=2,$ the above argument fails to generate the full region $\frac{1}{q} > \frac{d-1}{2}(\frac{1}{2} - \frac{1}{r})$. To overcome this, additionally, we run a similar  bilinear argument for the case $q=4$ (and $r\geq4$); by complex interpolation with the case $q=2$ and $r=2$, we are then able to get all desired estimates.
\clearpage
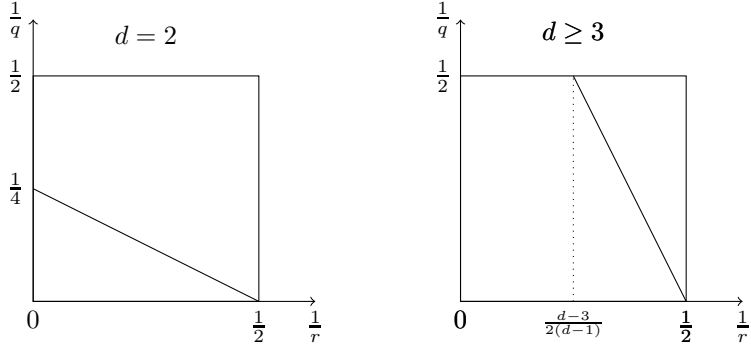
\begin{figure} \label{fig:waveadmissible}
\begin{center}
\begin{tikzpicture}[scale=3]
    \draw [<->] (0,2.5/2) node (yaxis) [left] {$\frac{1}{q}$}
        |- (2.5/2,0) node (xaxis) [below] {$\frac{1}{r}$};
        \draw (0, 0) rectangle (1, 1);
\node [left]at (0,1) {$\frac{1}{2}$};
\node [left]at (0,1/2) {$\frac{1}{4}$};
\node[below] at (0,0) {$0$};
\node[below] at (1,0) {$\frac{1}{2}$};
\node[above] at (1/2,1.1) {$d=2$};
\draw  (0,.5)--(1,0);
\end{tikzpicture}
\hspace{10mm}
\begin{tikzpicture}[scale=3]
    \draw [<->] (0,2.5/2) node (yaxis) [left] {$\frac{1}{q}$}
        |- (2.5/2,0) node (xaxis) [below] {$\frac{1}{r}$};
        \draw (0, 0) rectangle (1, 1);
\node [left]at (0,1) {$\frac{1}{2}$};

\node[below] at (0,0) {$0$};
\node[below] at (1,0) {$\frac{1}{2}$};
\node[above] at (1/2,1.1) {$d\geq 3$};
\node[below] at (1/2,0) {\tiny $\frac{d-3}{2(d-1)}$};
\node[below] at (0,0) {0};
\node[below] at (1,0) {$\frac{1}{2}$};
\node[above] at (1/2,1.1) {$d\geq 3$};
\draw[dotted] (1/2,0)--(1/2,1);
\draw  (1,0)--(.5,1);
\end{tikzpicture}
\end{center}
\caption{The line $\frac{1}{q} = \frac{d-1}{2}(\frac{1}{2} - \frac{1}{r})$}
\end{figure}

\subsection{Dyadic decomposition} Initially we follow the argument in \cite{BBGL}, first by observing that the desired estimate
\begin{equation} \label{e:Calphagoal}
\|\mathcal{C}^\alpha g\|_{L^q_tL^r_x} \lesssim \|g\|_{L^2}
\end{equation}
is implied by the boundedness from $L^2$ to $L^q_tL^r_x$ of the multiplier operator with multiplier $(|\xi| - \tau)_+^\alpha\phi(|\xi|)$ (our goal is $\alpha = \alpha^*(q,r)$, but in this series of reductions, $\alpha$ may be general). By using \eqref{e:monodecomp} to make a dyadic decomposition of this multiplier, it suffices to prove
\begin{equation} \label{e:afterreductions}
\bigg\|\sum_{k=k_0}^\infty 2^{- k\alpha} \mathcal{C}_k g\bigg\|_{L^q_tL^r_x} \lesssim \|g\|_{L^2},
\end{equation}
where we abbreviate $\mathcal{C}_{2^{-k}}$ to $\mathcal{C}_k$, and for an appropriately large choice of $k_0 \sim 1$ (to be determined later). Our proof of \eqref{e:afterreductions} will crucially rely on sharp estimates for each operator $\mathcal{C}_\delta$. 
\begin{proposition}\label{p:kdeltaest} 
Let $\tilde \psi\in\mathcal S(\R)$ and $\phi\in C_c^{\infty}(\R)$ supported in $[\frac{1}{2},2]$. Let $0<\delta\ll1$ and let $\tilde K_{\delta}$ be defined by
\[
\widehat{\tilde K_\delta}(\xi,\tau) = \phi(|\xi|)\tilde\psi\left(\frac{|\xi|-\tau}{\delta}\right) .
\]
Then
\begin{equation}\label{e:deltaest}
\|\tilde K_{\delta}\ast g\|_{L^q_tL^r_x}\lesssim \delta^{-\alpha^*(q,r)}\|g\|_{L^2}
\end{equation}
whenever $d\geq2, q,r\in[2,\infty],\frac{1}{q} \geq \frac{d-1}{2}(\frac{1}{2} - \frac{1}{r})$ and $(q,r,d) \neq (2,\infty,3)$. 
\end{proposition}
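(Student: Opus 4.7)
The natural approach is to interpret $\tilde K_\delta \ast g$ as a weighted superposition of frequency-localised half-wave propagations. Substituting $\tau = |\xi| - \delta s$ in the Fourier inversion formula gives
\[
\tilde K_\delta \ast g(x, t) \;=\; c_d \delta \int_\R \tilde\psi(s)\, e^{-it\delta s}\, \bigl(e^{it\sqrt{-\Delta}} h_s^\vee\bigr)(x) \, ds,
\]
where $h_s(\xi) := \phi(|\xi|) \hat g(\xi, |\xi| - \delta s)$ is frequency-localised to $|\xi| \sim 1$ and the prefactor $\delta$ comes from the Jacobian.

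I would establish the proposition first at two families of endpoints. At $(q, r) = (2, 2)$, the multiplier is bounded and Plancherel immediately gives $\|\tilde K_\delta \ast g\|_{L^2_{t, x}} \lesssim \|g\|_{L^2}$, matching $\delta^{-\alpha^\ast(2, 2)} = \delta^0$. Along the wave-admissible line $\frac{1}{q} = \frac{d-1}{2}\bigl(\frac{1}{2} - \frac{1}{r}\bigr)$, with the usual exclusion $(q, r, d) \neq (2, \infty, 3)$, Minkowski's inequality in $s$, followed by the Keel--Tao Strichartz estimate applied uniformly in $s$ (the frequency localisation absorbing the $\dot H^\gamma$-loss), followed by Cauchy--Schwarz in $s$, yields
\[
\|\tilde K_\delta \ast g\|_{\lqr} \;\lesssim\; \delta\, \|\tilde\psi\|_{L^2_s} \Bigl(\int_\R \|h_s\|_{L^2_\xi}^2 \, ds\Bigr)^{1/2} \;\lesssim\; \delta \cdot \delta^{-1/2} \|g\|_{L^2},
\]
where reversing the substitution $\tau \mapsto s$ produces the penultimate inequality. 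This gives the desired $\delta^{1/2} = \delta^{-\alpha^\ast}$ bound on the line.

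Since $\alpha^\ast(q, r)$ is affine in $(1/q, 1/r)$ throughout the admissible region, complex mixed-norm interpolation between these two families yields the stated estimate for $(q, r)$ in their convex hull. The principal obstacle I anticipate is geometric: for $d = 2$ the wave-admissible segment lies in $\{q \geq 4\}$, so the convex hull of the endpoints above misses the slice $q = 2$, $r \in (2, \infty]$. To cover this I would prove the auxiliary estimate $\|\tilde K_\delta \ast g\|_{L^\infty_x L^2_t} \lesssim \delta^{1/2} \|g\|_{L^2}$ by fixing $x$, applying Plancherel in $t$, and then Cauchy--Schwarz in $\xi$, using the elementary uniform bound $\|\phi(|\cdot|)\, \tilde\psi((|\cdot| - \tau)/\delta)\|_{L^2_\xi}^2 \lesssim \delta$ (verified by passing to polar coordinates and exploiting the Schwartz decay of $\tilde\psi$). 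Minkowski's integral inequality, valid here since $\infty \geq 2$, then upgrades this to $\|\tilde K_\delta \ast g\|_{L^2_t L^\infty_x} \lesssim \delta^{1/2} \|g\|_{L^2}$, which is stronger than the required $\delta^{1/4} = \delta^{-\alpha^\ast(2, \infty)}$ for $d = 2$; interpolating between $(2, 2)$ and this new bound at $(2, \infty)$ fills the missing slice and, together with the earlier interpolation, exhausts the entire admissible region.
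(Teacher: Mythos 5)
Your overall skeleton matches the paper's: Plancherel at $(q,r)=(2,2)$, reduction to the half-wave propagator by substituting out $\tau$, Strichartz on the admissible line, and interpolation to fill in the region. For $d\ge 4$ this is exactly the paper's argument and is correct. However, the way you propose to close the gap for $d=2$ contains a genuine error, and a secondary gap for $d=3$ goes unremarked.

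The error is in the Minkowski step. The mixed-norm Minkowski inequality states that for $p\le q$ one has $\|f\|_{L^q_x L^p_y}\le \|f\|_{L^p_y L^q_x}$, i.e.\ placing the \emph{larger} exponent on the outside gives the \emph{smaller} norm. With $p=2$, $q=\infty$ this reads
\[
\|\tilde K_\delta * g\|_{L^\infty_x L^2_t} \;\le\; \|\tilde K_\delta * g\|_{L^2_t L^\infty_x},
\]
which is the reverse of what you need: knowing $\|\tilde K_\delta * g\|_{L^\infty_x L^2_t}\lesssim\delta^{1/2}\|g\|_{L^2}$ gives no control on $\|\tilde K_\delta * g\|_{L^2_t L^\infty_x}$. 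Moreover the bound you hope to derive is actually \emph{false}. Take $d=2$ and $\widehat g$ equal to the indicator of a Knapp plank in the $\delta$-neighbourhood of the cone, of angular width $\delta^{1/2}$, radial extent $\sim 1$ and transverse thickness $\delta$, so that $\|g\|_{L^2}\sim\delta^{3/4}$. Then $\tilde K_\delta * g$ has size $\sim\delta^{3/2}$ on the dual slab, whose time-extent is $\sim\delta^{-1}$; for each such $t$ one has $\sup_x |\tilde K_\delta * g(x,t)|\gtrsim\delta^{3/2}$, giving $\|\tilde K_\delta * g\|_{L^2_t L^\infty_x}\gtrsim \delta^{-1/2}\cdot\delta^{3/2}=\delta$, and hence a ratio $\gtrsim\delta^{1/4}=\delta^{-\alpha^*(2,\infty)}$. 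So $\delta^{1/4}$ is the sharp exponent at $(q,r)=(2,\infty)$ when $d=2$; the $\delta^{1/2}$ you claim would contradict this example. The sharp $\delta^{1/4}$ bound is what is actually required, and it is not reached by Strichartz alone; this is precisely why the paper's proof refers to additional arguments in \cite[Section~5]{BBGL} for $d=2,3$ — typically a square-function or angular decomposition into $\sim\delta^{-1/2}$ sectors is needed. A subsidiary observation: for $d=3$ the endpoint $(0,\tfrac12)$ on the admissible line is excluded (since $(q,r,d)=(2,\infty,3)$ is inadmissible for Strichartz), and the convex hull of the half-open admissible segment together with $(\tfrac12,\tfrac12)$ does not contain the edge $\{q=2,\ r\in(2,\infty)\}$ either; your interpolation scheme therefore also leaves a gap at $d=3$ which you do not flag, and which the paper again covers via \cite{BBGL}.
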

This proposition is clearly applicable to the operator $\C_\delta$ by taking $\tilde\psi$ to have compact support; we shall need the slightly more general version stated above in order to prove (\ref{sqest}.). Although Proposition \ref{p:kdeltaest} was proved in \cite{BBGL}, we include some details below for reasons that will become apparent later.
\begin{proof}[Proof of Proposition \ref{p:kdeltaest}] Firstly, when $q = r =2$, the claimed estimate \eqref{e:deltaest} follows immediately from Plancherel's theorem. Thus, for $d \geq 4$, by interpolation, it is enough to check
\begin{equation} \label{e:sqrtdeltaboundg}
\|\tilde K_{\delta}\ast g\|_\lqr \lesssim  \delta^{1/2} \|g\|_{L^2}
\end{equation}
for $q$ and $r$ such that $\frac{1}{q} =  \frac{d-1}{2}(\frac12-\frac1r)$. To see this, we use the Fourier inversion formula and a change of variables (translation in time frequency variables) to write
\[
\tilde K_{\delta}\ast g(x,t) = c_d\int_\mathbb{R}  e^{-i st}\tilde\psi(\delta^{-1} s) e^{it\sqrt{-\Delta}}h_s(x) \, \mathrm{d}s
\]
where $\widehat{h_s}(\xi) = \phi(|\xi|)\widehat{g}(\xi, |\xi|-s)$. Applying the classical Strichartz estimates for the wave propagator (which are applicable since $\frac{1}{q} =  \frac{d-1}{2}(\frac12-\frac1r)$), we obtain
\[
\|\tilde K_{\delta}\ast g\|_\lqr \lesssim \int_\mathbb{R}  |\tilde\psi(\delta^{-1} s)| \bigg(\int_{\mathbb{R}^d} |\widehat{g}(\xi, |\xi|-s)|^2 \, \mathrm{d}\xi\bigg)^{1/2} \mathrm{d}s
\]
and now an application of the Cauchy--Schwarz inequality yields \eqref{e:sqrtdeltaboundg}; this completes the proof of Proposition \ref{p:kdeltaest} for $d \geq 4$.

For $d=2,3$ further arguments are required. For $d=2$, this is because the endpoint Strichartz estimate occurs at $(q,r) = (4,\infty)$, and for $d=3$, this is because of the absence of a Strichartz estimate for the wave propagator in the case $(d,q,r) = (3,2,\infty)$. The details of these arguments will not be of particular benefit for the current paper, so we simply refer the reader to \cite[Section 5]{BBGL}.
\end{proof}
\begin{remark}
Below, in the case $r=2$, it will be necessary to control the dependence on the implicit constant in \eqref{e:deltaest} in terms of the function $\tilde{\psi}$. It is clear from the above proof that we have
\begin{equation}\label{e:deltaestpsi}
\|\tilde K_{\delta}\ast g\|_{L^q_tL^2_x}\leq C\|\tilde{\psi}\|_\infty^{\frac{2}{q}} \|\tilde{\psi}\|_2^{1-\frac{2}{q}} \delta^{-(\frac{1}{q} - \frac{1}{2})}\|g\|_{L^2},
\end{equation}
where $C$ depends on $d$, $q$ and $\|\phi\|_\infty$. Moreover, this argument yields \eqref{e:deltaestpsi} for all $d \geq 2$.
\end{remark}

\subsection{The case $r=2$} \label{subsection:r2}
Our goal in this subsection is to prove \eqref{e:afterreductions} when $r=2$ at the critical exponent $\alpha=\alpha^*(q,2)=\frac{1}{q}-\frac{1}{2}$; that is,
\begin{equation}  \label{e:r2}
\bigg\|\sum_{k=k_0}^\infty 2^{- k\alpha} \mathcal{C}_k g\bigg\|_{L^q_tL^2_x} \lesssim \|g\|_{L^2_tL^2_x},
\end{equation}
where $\alpha = \frac{1}{q}-\frac{1}{2}$. The main ingredient in the proof are the estimates
\begin{equation}\label{e:s2}
\|\C_{\delta}g\|_{L^q_tL^2_x}\lesssim \delta^{-(\frac{1}{q}-\frac{1}{s})}\|g\|_{L^s_tL^2_x}, \quad q\geq s\geq2.
\end{equation}
We remark that using Proposition \ref{p:kdeltaest} and duality gives such an estimate for $1\leq s \leq 2 \leq q \leq\infty$; however, this range of $s$ and $q$ appears to be insufficient to obtain (\ref{e:r2}). To prove (\ref{e:s2}) for all $2\leq s\leq q$ we use a localisation argument in the temporal variable which has been used several times in literature on related multiplier estimates. If $\widehat{\psi}$ had compact support, (\ref{e:s2}) would follow easily via standard arguments for localised operators and H\"older's inequality. Since $\psi$ itself has compact support, a little additional work is required to handle $\widehat{\psi}$ in the Schwartz class.
\begin{proof}[Proof of \eqref{e:s2}]
Recall that $\psi\in C_c^{\infty}(\R)$ and $\phi\in C_c^{\infty}(\R)$ are both supported in $[\frac{1}{2},2]$, and let $K_\delta$ be given by
$$
\widehat{K}_\delta (\xi,\tau)=\psi\bigg(\frac{|\xi|-\tau}{\delta}\bigg)\phi(|\xi|).
$$ 
Next, by a standard smooth partition of unity, we write 
\begin{equation}\label{Keq}
K_\delta(x,t)=K_{\delta,0}(x,t)+\sum_{j \geq 1}K_{\delta,j}(x,t),
\end{equation}
where $K_{\delta,0}(x,t)=K_{\delta}(x,t)\eta_0(t\delta)$ and $K_{\delta,j}(x,t)=K_{\delta}(x,t)\eta(t\delta/2^j)$. Here,  $\eta_0 \in C_c^{\infty}(\R)$ is supported in a $O(1)$ neighbourhood of the origin, and $\eta \in C_c^\infty(\mathbb{R})$ is supported in $[\frac{1}{2},2]$, such that $\eta_0(s)+\sum_{j \geq 1}\eta(s/2^j)=1$ for $s > 0$. 

First we claim that
\begin{equation} \label{e:K0}
\|K_{\delta,0}\ast g\|_{L^q_tL^2_x}\lesssim \delta^{-(\frac{1}{q}-\frac{1}{s})}\|g\|_{L^s_tL^2_x}
\end{equation}
Since $K_{\delta,0}(x,\cdot)$ is supported in a $O(1/\delta)$ neighbourhood of the origin, by standard localisation arguments, we may assume that that $g$ is compactly supported in the temporal variable in some interval of length $O(1/\delta)$. For such $g$, we now apply Proposition \ref{p:kdeltaest} with $\tilde{\psi} = \psi * \widehat{\eta_0}$, followed by H\"older's inequality to obtain \eqref{e:K0}.

In a similar way, in estimating the contribution from $K_{\delta,j} * g$, we may assume that $g$ is compactly support in the temporal variable in some interval of length $O(2^j/\delta)$. From Proposition \ref{p:kdeltaest} with $\tilde{\psi} = \psi * \widehat{\eta}(2^j \cdot)$ (more precisely, using \eqref{e:deltaestpsi}), we obtain
\[
\|K_{\delta,j}\ast g\|_{L^q_tL^2_x}\lesssim_N \frac{1}{2^{jN}} \delta^{-(\frac{1}{q}-\frac{1}{2})}\|g\|_{L^2_tL^2_x}
\]
for any $N \geq 1$, since the support restriction on $\eta$ and Plancherel's theorem yields $\|\psi * \widehat{\eta}(2^j\cdot)\|_2 \lesssim_N 2^{-jN}$. Again using H\"older's inequality in the temporal variable, and then summing a convergent geometric series, we obtain
\[
\sum_{j \geq 1} \|K_{\delta,j}\ast g\|_{L^q_tL^2_x}\lesssim  \delta^{-(\frac{1}{q}-\frac{1}{s})}\|g\|_{L^s_tL^2_x}
\]
and hence \eqref{e:s2}.
\end{proof}

\subsection*{(\ref{e:s2}) implies (\ref{e:r2}).} 

We shall prove the family of estimates
\begin{equation}\label{e:r2strong}
\bigg\|\sum_{k=k_0}^\infty 2^{- k\alpha} \mathcal{C}_k g\bigg\|_{L^q_tL^2_x} \lesssim \|g\|_{L^{s}_tL^2_x}
\end{equation}
where $\alpha = \frac{1}{q} - \frac{1}{s}$, and $(\frac{1}{s},\frac{1}{q})$ lies in the interior of the convex hull of $(0,0)$, $(\frac{1}{2},0)$, $(1,\frac{1}{2})$ and $(1,1)$; see Figure 2. Clearly this includes the desired estimates in \eqref{e:r2} by setting $s=2$.

To see \eqref{e:r2strong}, we claim that \eqref{e:s2} implies the restricted weak type estimates
\begin{equation}\label{e:RWT}
\bigg\|\sum_{k=k_0}^\infty 2^{- k\alpha} \mathcal{C}_k g\bigg\|_{L^{q,\infty}_tL^2_x}\lesssim \|g\|_{L^{s,1}_tL^2_x}
\end{equation}
whenever $2\leq s\leq q<\infty$ and $\alpha = \frac{1}{q} - \frac{1}{s}$ (corresponding to $(\frac{1}{s},\frac{1}{q})$ in the triangle with vertices $(0,0)$, $(\frac{1}{2},0)$ and $(\frac{1}{2},\frac{1}{2})$ with the bottom edge omitted). Prior to proving this claim, we show how it yields \eqref{e:r2strong}. Indeed, by duality we immediately obtain the estimates
\begin{equation}\label{e:RWTdual}
\bigg\|\sum_{k=k_0}^\infty 2^{- k\alpha} \mathcal{C}_k g\bigg\|_{L^{q,\infty}_tL^2_x} \lesssim \|g\|_{L^{s,1}_tL^2_x}
\end{equation}
whenever $1 < s \leq q \leq 2$ and $\alpha = \frac{1}{q} - \frac{1}{s}$ (corresponding to $(\frac{1}{s},\frac{1}{q})$ in the triangle with vertices $(\frac{1}{2},\frac{1}{2})$, $(1,\frac{1}{2})$ and $(1,1)$ with the right edge omitted). Finally, real interpolation between \eqref{e:RWT} and \eqref{e:RWTdual} gives \eqref{e:r2strong} (here, we consider $\alpha$ as fixed and interpolate along $(\frac{1}{s},\frac{1}{q})$ line segments satisfying $\alpha = \frac{1}{q} - \frac{1}{s}$).
\begin{figure}
\begin{center}
\begin{tikzpicture}[scale=6]
    \draw [<->] (0,2.5/2) node (yaxis) [left] {$\frac{1}{q}$}
        |- (2.5/2,0) node (xaxis) [below] {$\frac{1}{s}$};
        \draw (0, 0) rectangle (1, 1);
\node [left]at (0,1) {1};
\node [left]at (0,1/2) {$\frac{1}{2}$};
\node[below] at (1/2,0) {$\frac{1}{2}$};
\node[below] at (0,0) {0};
\node[below] at (1,0) {1};


\shadedraw  (.5,0)--(1,.5)--(1,1)--(0,0);
\draw (0,1/2)--(1/2,1/2)--(1,1/2);

\draw (.7/2, .1)-- (.7/2, .3);

\fill(.7/2, .2/2) circle [radius=.25pt]  (.7/2, .45/2)circle [radius=.25pt] (.7/2, .3)circle [radius=.25pt];

\node [left]at (.7/2, .2/2) {\tiny ($1/s$,\tiny $1/q_2$)};
\node [right]at (.7/2, .45/2) {\tiny ($1/s$,\tiny $1/q$)};
\node [right]at (.7/2, .7/2) {\tiny ($1/s$,\tiny $1/q_1$)};

\draw (0,0)--(1/2,0);
\end{tikzpicture}
\end{center}
\caption{}
\end{figure}

We now justify the remaining claim.
\begin{proof}[Proof of (\ref{e:RWT})]
Fix $q,s$ such that $2 \leq s < q < \infty$ and define $\alpha =\frac{1}{q}-\frac{1}{s}$. We will prove the slightly stronger estimate
\begin{equation} \label{e:WT}
\bigg\|\sum_{k=k_0}^\infty 2^{- k\alpha} \mathcal{C}_k g\bigg\|_{L^{q,\infty}_tL^2_x} \lesssim \|g\|_{L^s_tL^2_x}.
\end{equation}
Let $\lambda > 0$ and assume, without loss of generality, that $\|g\|_{L^s_tL^2_x} = 1$. Then, by Chebyshev's inequality and the triangle inequality, we have
\[
\bigg|\bigg\{t: \bigg\|\sum_{k=k_0}^\infty 2^{- k\alpha} \mathcal{C}_k g\bigg\|_{L^2_x} >\lambda\bigg\}\bigg| \lesssim \lambda^{-q_1} I_1 + \lambda^{-q_2} I_2
\]
where
\begin{align*}
I_1 & := \int_\mathbb{R} \bigg(\sum_{k=k_0}^N 2^{-k\alpha}\|\mathcal{C}_{k}g(\cdot,t)\|_{L^2_x}\bigg)^{q_1}\, \mathrm{d}t \\
I_2 & := \int_\mathbb{R} \bigg(\sum_{k=N+1}^\infty 2^{-k\alpha}\|\mathcal{C}_{k}g(\cdot,t)\|_{L^2_x}\bigg)^{q_2}\, \mathrm{d}t,
\end{align*}
$N\in\mathbb{Z}$ will be chosen at the end of the proof, and $q_1$ and $q_2$ are such that $2 \leq s \leq q_1 < q < q_2 < \infty$ (see Figure 2). 

By the triangle inequality and \eqref{e:s2}, we obtain
$
I_1^{1/q_1} \lesssim 2^{N(\frac{1}{q_1} - \frac{1}{q})} 
$
and
$
I_2^{1/q_2} \lesssim 2^{-N(\frac{1}{q} - \frac{1}{q_2})}.
$
Putting these bounds together and optimising in the choice of $N$ yields
\[
\bigg|\bigg\{t: \bigg\|\sum_{k=k_0}^\infty 2^{- k\alpha} \mathcal{C}_k g\bigg\|_{L^2_x} >\lambda\bigg\}\bigg| \lesssim \lambda^{-q} 
\]
as claimed.
\end{proof}

\subsection{The case $q=2$} \label{subsection:q2}

Our next goal is to prove
\begin{equation}\label{e:q2}
\bigg\|\sum_{k=k_0}^\infty 2^{- k\alpha} \mathcal{C}_k g\bigg\|_{L^2_tL^r_x} \lesssim \|g\|_{L^2_tL^2_x}
\end{equation}
at the critical exponent $\alpha=\alpha^*(2,r)=\frac{d-1}{2}{(\frac{1}{r}-\frac{1}{2})}$. Here $r \in (2,\frac{2(d-1)}{d-3})$ for $d \geq 4$, and $r \in (2,\infty)$ for $d=2,3$. Our argument uses bilinear real interpolation and is based on the estimates
\begin{equation}\label{e:Cdeltabilinear}
|\agl{\C_{\delta}f,g}|\lesssim \delta^{\frac{d-1}{2}{(\frac{1}{2}-\frac{1}{a})}+\frac{d-1}{2}{(\frac{1}{2}-\frac{1}{b})}}\|f\|_{L^2_tL^{a'}_x}\|g\|_{L^2_tL^{b'}_x}
\end{equation}
for all $a$ and $b$ such that $(\frac{1}{a},\frac{1}{b}) \in \mathfrak{S}_d$. Here, $\mathfrak{S}_d = [\frac{d-3}{2(d-1)},\frac{1}{2}]^2$ for $d \geq 4$, $\mathfrak{S}_3 = (0,\frac{1}{2}]^2$ and $\mathfrak{S}_2 = [0,\frac{1}{2}]^2$. To see this, one writes $\agl{\C_{\delta}f,g}$ as $\agl{\C_{\delta}f,\tilde{\C}_\delta g}$, where $\tilde{\C}_\delta$ is defined in the same way as $\mathcal{C}_\delta$, except that the corresponding bump functions $\tilde{\psi}$ and $\tilde{\phi}$ have slighter enlarged support and are equal to 1 on the support of $\psi$ and $\phi$. Then, \eqref{e:Cdeltabilinear} follows from the Cauchy--Schwarz inequality and \eqref{e:deltaest}.

The bilinear interpolation result we use is stated below (see, for example, \cite{BerghLofstrom}).
\begin{lemma}\label{l:LP}
Suppose $A_0,A_1,B_0,B_1,C_0,C_1$ are Banach spaces. Suppose also that the bilinear operator $T$ is bounded as follows:
\begin{equation*}
	\begin{aligned}
		T	&:A_0\times B_0\rightarrow C_0,\\
		T	&:A_0\times B_1\rightarrow C_1,\\
		T	&:A_1\times B_0\rightarrow C_1.
	\end{aligned}
\end{equation*}
Then, whenever $0<\theta_0,\theta_1<\theta<1$ and $p_0,p_1,\sigma \in [1,\infty]$ satisfy $1\leq \frac{1}{p_0}+\frac{1}{p_1}$ and $\theta_0+\theta_1=\theta,$ 
then
$$
T:	\left(A_0,A_1\right)_{\theta_0,\sigma p_0}	\times	\left(B_0,B_1\right)_{\theta_1,\sigma p_1}\rightarrow	\left(C_0,C_1\right)_{\theta,\sigma}
$$
is bounded.
\end{lemma}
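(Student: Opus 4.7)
The plan is to follow the classical Lions--Peetre strategy for bilinear real interpolation, as documented in \cite{BerghLofstrom}: represent elements of the input spaces via the discrete $J$-method, measure the output via the discrete $K$-method, and reduce the whole estimate to a weighted summation inequality on $\Z^2$. Concretely, given $a \in (A_0,A_1)_{\theta_0,\sigma p_0}$ and $b \in (B_0,B_1)_{\theta_1,\sigma p_1}$, I would write $a = \sum_{j\in\Z} u_j$ and $b = \sum_{k\in\Z} v_k$ with $u_j \in A_0\cap A_1$, $v_k \in B_0\cap B_1$, such that the sequences $\alpha_j := 2^{-j\theta_0} J_A(2^j, u_j)$ and $\beta_k := 2^{-k\theta_1} J_B(2^k, v_k)$ have $\ell^{\sigma p_0}$ and $\ell^{\sigma p_1}$ norms equivalent to the respective input norms, where $J_A(t,u) = \max(\|u\|_{A_0}, t\|u\|_{A_1})$ and $J_B$ is defined analogously.

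Combining bilinearity with the three hypothesised bounds and the pointwise comparisons $\|u_j\|_{A_0} \leq J_A(2^j,u_j)$, $\|u_j\|_{A_1} \leq 2^{-j} J_A(2^j,u_j)$ (and their counterparts for $v_k$) yields
\begin{align*}
\|T(u_j, v_k)\|_{C_0} & \lesssim J_A(2^j, u_j)\, J_B(2^k, v_k), \\
\|T(u_j, v_k)\|_{C_1} & \lesssim 2^{-\max(j,k)}\, J_A(2^j, u_j)\, J_B(2^k, v_k),
\end{align*}
the second bound arising as the minimum of the two hypothesised $C_1$ estimates. To control $\|T(a,b)\|_{(C_0,C_1)_{\theta,\sigma}}$ via the discrete $K$-functional, for each $m \in \Z$ I would split $T(a,b) = c_m^0 + c_m^1$ with $c_m^0 := \sum_{\max(j,k) \leq m} T(u_j,v_k)$, estimate $c_m^0$ in $C_0$ and $c_m^1$ in $C_1$, and combine with the identity $\theta_0 + \theta_1 = \theta$ and the change of variables $j \mapsto j+m$, $k \mapsto k+m$ to arrive at
\[
2^{-m\theta} K(2^m, T(a,b)) \lesssim \sum_{j,k\in\Z} \mu(j,k)\, \alpha_{j+m}\, \beta_{k+m}, \qquad \mu(j,k) := 2^{j\theta_0+k\theta_1}\min(1,2^{-\max(j,k)}).
\]

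The endgame is to take the $\ell^\sigma_m$ norm of both sides. Minkowski's inequality (valid since $\sigma \geq 1$) pulls the $(j,k)$-summation outside, and then Hölder's inequality on the shifted sequences, combined with the embedding $\ell^\rho \hookrightarrow \ell^\sigma$ for counting measure (where $1/\rho = 1/(\sigma p_0) + 1/(\sigma p_1) \geq 1/\sigma$ by the standing hypothesis $1 \leq 1/p_0 + 1/p_1$), bounds the right-hand side by $\|\mu\|_{\ell^1(\Z^2)} \|\alpha\|_{\ell^{\sigma p_0}}\|\beta\|_{\ell^{\sigma p_1}}$. The main technical hurdle is not any single step but the bookkeeping: I expect checking $\mu \in \ell^1(\Z^2)$ to be the focal point, since the geometric series at $\pm\infty$ in both the $j$ and $k$ directions converge precisely when $0 < \theta_0, \theta_1$ and $\theta_0 + \theta_1 = \theta < 1$, which is exactly why all three strict inequalities appear in the hypotheses. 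The role of the Young-type condition $1 \leq 1/p_0 + 1/p_1$ is correspondingly isolated to the final Hölder step, showing it is neither a scaling nor a convergence condition but a mixed-norm embedding condition.
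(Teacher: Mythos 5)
Your proof is correct. Note that the paper does not actually prove this lemma---it cites \cite{BerghLofstrom}---and your argument is essentially the classical Lions--Peetre $J$-to-$K$ method found there: the $J$-representation of the inputs, the two bilinear bounds (with the $C_1$ bound being $\min(2^{-j},2^{-k})=2^{-\max(j,k)}$ times the product of $J$-functionals, obtained by taking the minimum of the two available $C_1$ hypotheses), the discrete $K$-split at $\max(j,k)=m$ and the shift $j\mapsto j+m$, $k\mapsto k+m$ producing the convolution kernel $\mu$, the $\ell^1(\Z^2)$ summability of $\mu$ coming precisely from $\theta_0,\theta_1>0$ (tails at $-\infty$) and $\theta_0+\theta_1=\theta<1$ (tails at $+\infty$ along the diagonal), and the final Minkowski-then-H\"older-plus-embedding step where $1\leq 1/p_0+1/p_1$ is exactly what guarantees $1/(\sigma p_0)+1/(\sigma p_1)\geq 1/\sigma$ so that $\ell^\rho\hookrightarrow\ell^\sigma$ applies after H\"older with the exact conjugate exponents. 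All steps check out, and your identification of the distinct roles played by each hypothesis is accurate.
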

\begin{proof}[Proof of \eqref{e:q2}]
By duality, \eqref{e:q2} is equivalent to the estimate 
$$
\bigg\|\sum_{k=k_0}^\infty 2^{- k\alpha} \mathcal{C}_k g\bigg\|_{L^2_tL^2_x} \lesssim \|g\|_{L^2_tL^{r'}_x}
$$
for which it suffices to prove
\begin{equation} \label{e:sumq2}
\sum_{k=k_0}^\infty 2^{-2k\alpha} |\langle \mathcal{C}_k f, g \rangle|  \lesssim \|f\|_{L^{2}_tL^{r'}_x}\|g\|_{L^2_tL^{r'}_x}.
\end{equation}
We recall that $\alpha=\frac{d-1}{2}{(\frac{1}{r}-\frac{1}{2})}$.

If we define $\beta(a,b):=\frac{d-1}{2}{(\frac{1}{a}-\frac{1}{2})}+\frac{d-1}{2}{(\frac{1}{b}-\frac{1}{2})}$, then we can rewrite estimate (\ref{e:Cdeltabilinear}) as
\begin{equation}\label{e:T}
T:L^2_tL^{a'}_x	\times	L^2_tL^{b'}_x	\rightarrow	\ell^{\infty}_{\beta(a,b)}
\end{equation}
for all $(\frac{1}{a},\frac{1}{b}) \in \mathfrak{S}_d$, where $T=\{\agl{\C_{k}\cdot,\cdot }\}_{k}$ is the vector-valued bilinear operator corresponding to $\C_{k}$ and $\ell^p_\beta$ is the weighted sequence space with norm
\[
\| \{a_k\}_k\|_{\ell^p_\beta} = \bigg( \sum_k 2^{k\beta} |a_k|^p \bigg)^{1/p} \qquad (1 \leq p < \infty)
\]
and
$
\| \{a_k\}_k\|_{\ell^\infty_\beta} = \sup_k 2^{k\beta} |a_k|.
$

Now take general exponents $a_0,a_1,b_0,b_1$ such that $(\frac{1}{a_0},\frac{1}{b_0}), (\frac{1}{a_0},\frac{1}{b_1}), (\frac{1}{a_1},\frac{1}{b_0}) \in \mathfrak{S}_d$ and
$$		
\beta(a_0,b_1)=\beta(a_1,b_0)\neq\beta(a_0,b_0).
$$
The relation above is true as long as $\frac{1}{a_1}-\frac{1}{a_0} = \frac{1}{b_1}-\frac{1}{b_0}$ and $a_0 \neq a_1$; see Figure 3. From \eqref{e:Cdeltabilinear} we have that $T$ is bounded as follows:
	\begin{equation*}
	\begin{aligned}
		T	&:L^2_tL^{a_0'}_x	\times	L^2_tL^{b_0'}_x	\rightarrow	\ell^{\infty}_{\beta(a_0,b_0)},\\
		T	&:L^2_tL^{a_0'}_x	\times	L^2_tL^{b_1'}_x	\rightarrow	\ell^{\infty}_{\beta(a_0,b_1)},\\
		T	&:L^2_tL^{a_1'}_x	\times	L^2_tL^{b_0'}_x	\rightarrow	\ell^{\infty}_{\beta(a_1,b_0)}.
	\end{aligned}
\end{equation*}	
By Lemma \ref{l:LP} (with $\sigma=1, p_0=p_1=2$) we deduce that 
\begin{equation} \label{e:Tbounds}
T:	L^2_tL^{a',2}_x \times L^2_tL^{b',2}_x \rightarrow \ell^1_{\beta(a,b)}
\end{equation}
is bounded, where
\[
\frac{1}{a} = \frac{1-\theta_0}{a_0} + \frac{\theta_0}{a_1}, \quad \frac{1}{b} = \frac{1-\theta_1}{b_0} + \frac{\theta_1}{b_1}, \quad  0<\theta_0,\theta_1<\theta<1, \quad \theta_0 + \theta_1 = \theta.
\]
Also, for $\vartheta \in (0,1)$, we have used the interpolation identities
\begin{equation*}
(L^2_tL_x^{p_0},L^2_tL^{p_1}_x)_{\vartheta,2} =	L^2_tL_x^{p_\vartheta,2}
\end{equation*}
where $\frac{1}{p_\vartheta} = \frac{1-\vartheta}{p_0} + \frac{\vartheta}{p_1}$ (see, for example, \cite{LionsPeetre} or \cite{Cwikel}), and
\begin{equation*}
(\ell^\infty_{\beta_0},\ell^\infty_{\beta_1})_{\vartheta,1} = \ell^1_{(1-\vartheta)\beta_0 + \vartheta\beta_1}
\end{equation*}
where $\beta_0 \neq \beta_1$ (see, for example, \cite{BerghLofstrom}). In other words, we have the boundedness of $T$ as in \eqref{e:Tbounds} for all $(\frac{1}{a},\frac{1}{b})$ in the interior of the triangle with vertices $(\frac{1}{a_0},\frac{1}{b_0}), (\frac{1}{a_0},\frac{1}{b_1}), (\frac{1}{a_1},\frac{1}{b_0}) \in \mathfrak{S}_d$, and since these were arbitrary points of $\mathfrak{S}_d$, the boundedness holds in the interior of $\mathfrak{S}_d$. 

Finally, take $a=b=r$, where $r \in (2,\frac{2(d-1)}{d-3})$ for $d \geq 4$, or $r \in (2,\infty)$ for $d=2,3$. Since we have the inclusion $L^{r'} \subset L^{r',2}$, we obtain
$$
T:L^2_tL^{r'}_x	\times	L^2_tL^{r'}_x	\rightarrow	\ell^{1}_{(d-1){(\frac{1}{r}-\frac{1}{2})}},
$$
which implies (\ref{e:sumq2}), as desired.
\end{proof}

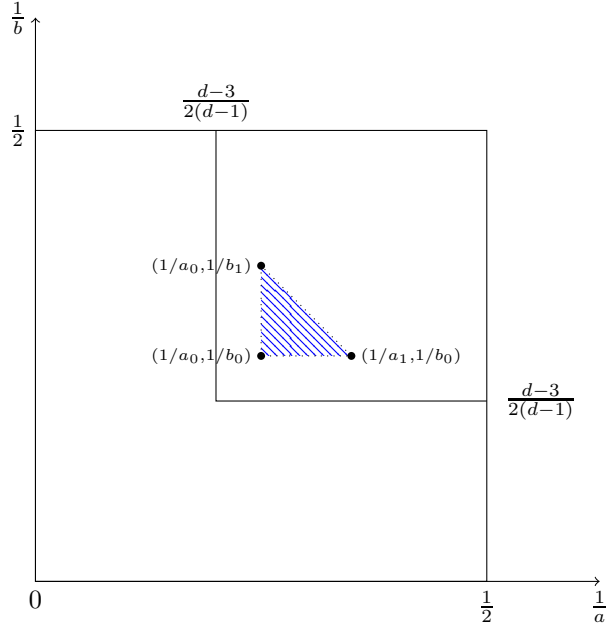
\begin{figure}
\begin{center}
\begin{tikzpicture}[scale=6]
    \draw [<->] (0,2.5/2) node (yaxis) [left] {$\frac{1}{b}$}
        |- (2.5/2,0) node (xaxis) [below] {$\frac{1}{a}$};
        \draw (0, 0) rectangle (1, 1);
\node [left]at (0,1) {$\frac{1}{2}$};
\node [right]at (1,.4) {\ $\frac{d-3}{2(d-1)}$};
\node[above] at (.4,1) { $\frac{d-3}{2(d-1)}$};
\node[below] at (0,0) {0};
\node[below] at (1,0) {$\frac{1}{2}$};

\draw (.4,1)--(.4,.4)--(1,.4);
\draw[pattern=north west lines, pattern color=blue,dotted] (.5,.5)--(.5,.7)--(.7,.5)--(.5,.5);

\fill(.5, .5) circle [radius=.25pt]  (.5, .7)circle [radius=.25pt] (.7, .5)circle [radius=.25pt];

\node [left]at (.5, .5) {\tiny ($1/a_0$,\tiny $1/b_0$)};
\node [left]at (.5, .7) {\tiny ($1/a_0$,\tiny $1/b_1$)};
\node [right]at (.7, .5) {\tiny ($1/a_1$,\tiny $1/b_0$)};
\end{tikzpicture}
\end{center}
\caption{The subsquare is the region $\mathfrak{S}_d$ and the bilinear interpolation step yields estimates in the shaded region}
\end{figure}

\subsection{The case $d=2$ and $q=4$} \label{subsection:q4}

In this section, $d=2$ and we shall prove \eqref{e:afterreductions} when $q=4$ and $r \in [4,\infty)$ at the critical exponent; that is, 
\begin{equation}  \label{e:q4}
\bigg\|\sum_{k=k_0}^\infty 2^{- k\alpha} \mathcal{C}_k g\bigg\|_{L^4_tL^r_x} \lesssim \|g\|_{L^2_tL^2_x},
\end{equation}
where $\alpha=\alpha^*(4,r)=\frac{1}{2}(\frac{1}{r}-1)$. Interpolating these estimates with \eqref{e:r2} and \eqref{e:q2} we obtain the desired estimates \eqref{e:afterreductions} whenever $q,r \in [2,\infty)$ are such that $\frac{1}{q} > \frac{1}{2}(\frac{1}{2} - \frac{1}{r})$.

\begin{proof}[Proof of \eqref{e:q4}]
We follow the same strategy used to prove \eqref{e:q2}, first noting that it suffices to prove
\begin{equation} \label{e:sumq4}
\sum_{k=k_0}^\infty 2^{-2k\alpha} |\langle \mathcal{C}_k f, g \rangle|  \lesssim \|f\|_{L^{\frac{4}{3}}_tL^{r'}_x}\|g\|_{L^{\frac{4}{3}}_tL^{r'}_x}
\end{equation}
for $r \in [4,\infty)$ and $\alpha=\frac{1}{2}(\frac{1}{r}-1)$. In this case we use the bilinear estimates
\begin{equation}\label{e:Cdeltabilinearq4}
|\agl{\C_{\delta}f,g}|	\lesssim \delta^{\frac{1}{2}{(\frac{1}{2}-\frac{1}{a})}+\frac{1}{2}{(\frac{1}{2}-\frac{1}{b})}+\frac{1}{2}}\|f\|_{L^\frac{4}{3}_tL^{a'}_x}\|g\|_{L^\frac{4}{3}_tL^{b'}_x},
\end{equation}
which are valid for all $(a,b) \in [2,\infty]^2$ (follow from \eqref{e:deltaest}), which may be interpreted as the boundedness of $T=\{\agl{\C_{k}\cdot,\cdot }\}_{k}$ from
\begin{equation}\label{e:T1}
T:L^\frac{4}{3}_tL^{a'}_x	\times	L^\frac{4}{3}_tL^{b'}_x	\rightarrow	\ell^{\infty}_{\gamma(a,b)},
\end{equation}
where $\gamma(a,b):=\frac{1}{2}{(\frac{1}{a}-\frac{1}{2})}+\frac{1}{2}{(\frac{1}{b}-\frac{1}{2})}-\frac{1}{2}$.

Suppose $(\frac{1}{a_0},\frac{1}{b_0}), (\frac{1}{a_0},\frac{1}{b_1}), (\frac{1}{a_1},\frac{1}{b_0}) \in \mathfrak{S}_2$ are such that $\frac{1}{a_1}-\frac{1}{a_0} = \frac{1}{b_1}-\frac{1}{b_0}$ and $a_0 \neq a_1$, and thus
$$		
\gamma(a_0,b_1)=\gamma(a_1,b_0)\neq\gamma(a_0,b_0).
$$
Then \eqref{e:Cdeltabilinearq4} implies that $T$ is bounded from
	\begin{equation*}
	\begin{aligned}
		T	&:L^\frac{4}{3}_tL^{a_0'}_x	\times	L^\frac{4}{3}_tL^{b_0'}_x	\rightarrow	\ell^{\infty}_{\gamma(a_0,b_0)},\\
		T	&:L^\frac{4}{3}_tL^{a_0'}_x	\times	L^\frac{4}{3}_tL^{b_1'}_x	\rightarrow	\ell^{\infty}_{\gamma(a_0,b_1)},\\
		T	&:L^\frac{4}{3}_tL^{a_1'}_x	\times	L^\frac{4}{3}_tL^{b_0'}_x	\rightarrow	\ell^{\infty}_{\gamma(a_1,b_0)}.
	\end{aligned}
\end{equation*}	
By Lemma \ref{l:LP} (with $\sigma=1, p_0=p_1=\frac{4}{3}$), $T$ is bounded from
\begin{equation} \label{e:T1bounds}
T:	L^\frac{4}{3}_tL^{a',\frac{4}{3}}_x \times L^\frac{4}{3}_tL^{b',\frac{4}{3}}_x \rightarrow \ell^1_{\gamma(a,b)},
\end{equation}
where
\[
\frac{1}{a} = \frac{1-\theta_0}{a_0} + \frac{\theta_0}{a_1}, \quad \frac{1}{b} = \frac{1-\theta_1}{b_0} + \frac{\theta_1}{b_1}, \quad  0<\theta_0,\theta_1<\theta<1, \quad \theta_0 + \theta_1 = \theta.
\]
In this case, we have used the interpolation identities
\begin{equation*}
(L^\frac{4}{3}_tL_x^{p_0},L^\frac{4}{3}_tL^{p_1}_x)_{\vartheta,\frac{4}{3}} =	L^\frac{4}{3}_tL_x^{p_\vartheta,\frac{4}{3}},
\end{equation*}
where $\frac{1}{p_\vartheta} = \frac{1-\vartheta}{p_0} + \frac{\vartheta}{p_1}$, and $\vartheta \in (0,1)$ (see, for example, \cite{LionsPeetre} or \cite{Cwikel}). Hence,$T$ is bounded as in \eqref{e:T1bounds} for all $(\frac{1}{a},\frac{1}{b})$ in the interior of the triangle with vertices $(\frac{1}{a_0},\frac{1}{b_0}), (\frac{1}{a_0},\frac{1}{b_1}), (\frac{1}{a_1},\frac{1}{b_0}) \in \mathfrak{S}_2$. These vertices were chosen arbitrarily and therefore the boundedness holds in the interior of $\mathfrak{S}_2$. Specialising to $a=b=r \in [4,\infty)$ and using the inclusion $L^{r'} \subset L^{r',\frac{4}{3}}$, we obtain \eqref{e:sumq4}, as desired.
\end{proof}

Following the strategy outlined in Section \ref{subsection:outline}, we have now completed our proof of Theorem \ref{t:coneestimates} in the critical case $\alpha = \alpha^*(q,r)$ whenever $\frac{1}{q} > \frac{d-1}{2}(\frac{1}{2} - \frac{1}{r})$.

\section{The Schr\"odinger operator} \label{section:schrodinger} 

Whilst the conical geometry of the singularity region of the multiplier of $\mathcal{C}^\alpha$ played a key role in the proof of Proposition \ref{p:kdeltaest}, captured through the Strichartz estimates for the wave operator $e^{it\sqrt{-\Delta}}$, the other steps were not specific to the cone. Since the theory of Strichartz estimates has been extensively developed, our arguments in Section \ref{section:cone} are readily applicable in other contexts. In this final section, we illustrate this concretely with the Schr\"odinger operator.

Let us define the multiplier operator $\mathcal{S}^\lambda$ by
\[
\mathcal F(\mathcal S^\lambda g)(\xi,\tau)=|\tau-|\xi|^2|^{\lambda} \widehat{g}(\xi,\tau)
\]
and consider estimates of the form
\begin{equation}\label{e:schrodinger} 
\| \mathcal S^\lambda g\|_{L^q_tL^r_x}\lesssim \|g\|_{L^2}.
\end{equation}
By a simple scaling argument one sees that \eqref{e:schrodinger} is true only if $\lambda = \lambda_\circ(q,r)$, where
\begin{equation}
\label{e:schrodinger-scaling}
\lambda_\circ(q,r)=\frac{1}{q}+\frac{d}{2r}-\frac{d+2}{4}. 
\end{equation}
\begin{theorem}\label{t:schrodinger-global} Suppose $d \geq 1$ and $q,r\in [2,\infty)$. Then $\mathcal{S}^\lambda$ is bounded from $L^2_{t,x}$ to $L^q_tL^r_x$ if and only if $\lambda = \lambda_\circ(q,r)$ and $-\frac12<\lambda\le 0$ (equivalently, $\frac{1}{q} > \frac{d}{2}(\frac{1}{2} - \frac{1}{r})$). 
\end{theorem}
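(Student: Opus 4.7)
The plan is to transpose the three-step strategy of Section \ref{section:cone} to the paraboloid, with Strichartz estimates for $e^{it\Delta}$ replacing those for $e^{it\sqrt{-\Delta}}$. Necessity of the scaling relation $\lambda = \lambda_\circ(q,r)$ is forced by the parabolic dilation $g(x,t)\mapsto g(\mu x,\mu^2 t)$; one checks that $\lambda \leq 0$ is automatic whenever $q,r\in[2,\infty)$, so the only genuine condition is $\lambda > -\tfrac{1}{2}$, equivalently $\tfrac{1}{q} > \tfrac{d}{2}(\tfrac{1}{2}-\tfrac{1}{r})$. This is extracted from a Knapp example, taking $\widehat{g}$ to be the characteristic function of a parabolic cap of dimensions $N\times\cdots\times N\times N^2$ based at a point with $|\xi|\sim N$: one computes that both sides of \eqref{e:schrodinger} have comparable size precisely when $\lambda \geq -\tfrac{1}{2}$.

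For sufficiency, the first step is the paraboloid analogue of Proposition \ref{p:kdeltaest}: with
\[
\widehat{\tilde K_\delta^{\mathcal S}}(\xi,\tau) = \phi(|\xi|)\tilde\psi\bigl((\tau-|\xi|^2)/\delta\bigr)
\]
and $0<\delta\ll1$, establish $\|\tilde K_\delta^{\mathcal S}\ast g\|_{L^q_tL^r_x}\lesssim \delta^{-\lambda_\circ(q,r)}\|g\|_{L^2}$ whenever $q,r\in[2,\infty]$ and $\tfrac{1}{q}\geq\tfrac{d}{2}(\tfrac{1}{2}-\tfrac{1}{r})$, excluding the failed Strichartz endpoints $(d,q,r)=(2,2,\infty)$ and $(1,4,\infty)$. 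Interpolating Plancherel at $(q,r) = (2,2)$ with the Strichartz-admissible line $\tfrac{1}{q}=\tfrac{d}{2}(\tfrac{1}{2}-\tfrac{1}{r})$ (where $\lambda_\circ = -\tfrac{1}{2}$), one reduces matters to showing $\|\tilde K_\delta^{\mathcal S}\ast g\|_{L^q_tL^r_x}\lesssim \delta^{1/2}\|g\|_{L^2}$ on that line; this follows from
\[
\tilde K_\delta^{\mathcal S}\ast g(x,t)=c_d\int_{\mathbb R} e^{-ist}\tilde\psi(\delta^{-1}s)\,e^{it\Delta}h_s(x)\,\mathrm{d}s,\qquad \widehat{h_s}(\xi)=\phi(|\xi|)\widehat g(\xi,|\xi|^2+s),
\]
together with Strichartz for $e^{it\Delta}$ and Cauchy--Schwarz. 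The $\tilde\psi$-quantitative refinement \eqref{e:deltaestpsi} is preserved by the same calculation.

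To reach the critical exponent $\lambda=\lambda_\circ$, a Littlewood--Paley decomposition in $|\xi|$ combined with parabolic scaling (both operations exactly consistent with the critical scaling) reduces the theorem to estimating the $|\xi|\sim 1$ piece of $\mathcal{S}^{\lambda_\circ}$, bounded by $\sum_{k\geq k_0}2^{-k\lambda_\circ}\mathcal{S}_k g$ with $\mathcal{S}_k$ the dyadic cutoffs in $|\tau-|\xi|^2|$. One then imports the two-case argument of Sections \ref{subsection:r2}--\ref{subsection:q2} verbatim. Along $r=2$, the $t$-localisation trick yields the family $\|\mathcal{S}_\delta g\|_{L^q_tL^2_x}\lesssim\delta^{-(\frac{1}{q}-\frac{1}{s})}\|g\|_{L^s_tL^2_x}$ for $2\leq s\leq q$, and the restricted weak-type plus real interpolation of Section \ref{subsection:r2} closes this case. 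Along $q=2$, the bilinear estimate
\[
|\agl{\mathcal{S}_\delta f,g}|\lesssim \delta^{\frac{d}{2}(\frac{1}{2}-\frac{1}{a})+\frac{d}{2}(\frac{1}{2}-\frac{1}{b})+\frac{1}{2}}\|f\|_{L^2_tL^{a'}_x}\|g\|_{L^2_tL^{b'}_x}
\]
(following from the Proposition \ref{p:kdeltaest} analogue by Cauchy--Schwarz, valid for $(\tfrac{1}{a},\tfrac{1}{b})$ in a square $\mathfrak{S}^{\mathcal S}_d$ determined by Strichartz admissibility for $e^{it\Delta}$ at $q=2$) combined with Lemma \ref{l:LP}, in the spirit of \eqref{e:Cdeltabilinear}, closes the $q=2$ line. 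Complex interpolation between the two regimes then yields the full target region $\tfrac{1}{q}>\tfrac{d}{2}(\tfrac{1}{2}-\tfrac{1}{r})$.

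The principal obstacle is the low-dimensional Strichartz endpoint geometry, which is more restrictive for Schr\"odinger than for the wave equation. For $d\geq 3$ the endpoint $(2,\tfrac{2d}{d-2})$ is admissible and the scheme above runs unmodified. For $d=2$ the Strichartz line terminates at the forbidden pair $(2,\infty)$, and for $d=1$ at the forbidden pair $(4,\infty)$, so the $q=2$ bilinear argument does not directly sweep out the full admissible region. Mirroring the treatment of $d=2$ for the cone in Section \ref{subsection:q4}, one then runs an auxiliary bilinear argument anchored inside the admissible region (for example at $q=4$ with $r<\infty$ when $d=1$) and combines it by complex interpolation with the $r=2$ and $q=2$ partial results to reach every $(q,r)$ with $\tfrac{1}{q}>\tfrac{d}{2}(\tfrac{1}{2}-\tfrac{1}{r})$.
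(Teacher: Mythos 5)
Your proposal follows essentially the same route as the paper: reduce to a frequency-localised version (the paper's Theorem \ref{t:schrodinger-local}) by parabolic rescaling together with Littlewood--Paley orthogonality, and prove its critical case by transposing the three steps of Section \ref{section:cone} — the Strichartz-based analogue of Proposition \ref{p:kdeltaest}, the $r=2$ temporal-localisation argument with restricted weak-type bounds and real interpolation, and the $q=2$ bilinear interpolation argument — supplemented by the auxiliary $q=4$ argument in the lowest dimension.

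Two small factual points. First, for $d=1$ the Schr\"odinger Strichartz estimate at $(q,r)=(4,\infty)$ is in fact \emph{valid}; the Keel--Tao type failure occurs only at $(q,r,d)=(2,\infty,2)$, which is precisely the exclusion in Proposition \ref{p:sdelta-estimate}. The reason an auxiliary $q=4$ argument is needed when $d=1$ is purely geometric: the admissible line $\frac{2}{q}+\frac{d}{r}=\frac{d}{2}$ terminates at $(4,\infty)$ and never reaches $q=2$, so the $r=2$ and $q=2$ families cannot be interpolated to fill the whole target region $\frac{1}{q}>\frac{d}{2}(\frac12-\frac1r)$ — exactly as happens for the cone when $d=2$. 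Second, a single Knapp cap only yields the closed necessary condition $\lambda\ge -\frac12$; ruling out $\lambda=-\frac12$ itself, which is part of the stated ``only if'', requires a summed or randomised family of such examples, and the paper defers this to the treatment in \cite{BBGL}.
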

Thanks to duality, a straightforward consequence of this is the following Sobolev-type estimate. For all $q,r,\tilde q, \tilde r \in [2,\infty)$ such that $\frac{1}{q} > \frac{d}{2}(\frac{1}{2} - \frac{1}{r})$ and $\frac{1}{\tilde q} > \frac{d}{2}(\frac{1}{2} - \frac{1}{\tilde r})$, the estimate 
\[  
\| f\|_{L^q_tL^r_x} \lesssim  \||\Delta-i \partial_t|^{\gamma} f\|_{L^{\tilde{q}'}_tL^{\tilde{r}'}_x} 
\] 
holds with $\gamma=- \lambda_\circ(q,r)-\lambda_\circ(\tilde q,\tilde r).$ We shall prove Theorem \ref{t:schrodinger-global} as a result of the \emph{critical case} in the forthcoming Theorem \ref{t:schrodinger-local} (which serves as an analogue of Theorem \ref{t:coneestimates}). We also remark that similar Sobolev-type estimates are available in the wave case via Theorem \ref{t:coneestimates}.

Let $\phi$ be a smooth frequency cutoff function whose support is contained in the unit ball in $\mathbb{R}^{d+1}$ centred at the origin  and which satisfies $\phi(0,0)=1$.  Then, we consider the operator $\tilde{\mathcal S}^\lambda$  with localised frequency given by 
\[ 
\mathcal F(\tilde{\mathcal S}^\lambda g)(\xi,\tau)=|\tau-|\xi|^2|^{\lambda} \phi(\xi,\tau) \widehat{g}(\xi,\tau).
\] 
\begin{theorem} \label{t:schrodinger-local}
Suppose $d \geq 1$ and $q,r\in [2,\infty)$.
\begin{enumerate}
\item Suppose $\frac{1}{q} \leq \frac{d}{2}(\frac{1}{2} - \frac{1}{r})$. Then $\tilde {\mathcal S}^\lambda$ is bounded from $L^2_{t,x}$ to $L^q_tL^r_x$ if and only if $\lambda > \lambda^*$.
\item Suppose $\frac{1}{q} > \frac{d}{2}(\frac{1}{2} - \frac{1}{r})$. Then $\tilde {\mathcal S}^\lambda$ is bounded from $L^2_{t,x}$ to $L^q_tL^r_x$ if and only if $\lambda \geq \lambda^*$.
\end{enumerate}
Here,
$
\lambda^* = \lambda^*(q,r) = \max\big\{\frac{1}{q}+\frac{d}{2r}-\frac{d+2}{4}, -\frac{1}{2}\big\}.
$
\end{theorem}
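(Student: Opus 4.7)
The plan is to mirror the proof of Theorem \ref{t:coneestimates} in Section \ref{section:cone} step by step, replacing the cone $\tau = |\xi|$ by the paraboloid $\tau = |\xi|^2$ and the wave propagator $e^{it\sqrt{-\Delta}}$ by the Schr\"odinger propagator $e^{it\Delta}$. The necessity of the conditions, together with sufficiency of the strict case $\lambda > \lambda^* = -\tfrac{1}{2}$ in the region $\frac{1}{q} \leq \frac{d}{2}(\frac{1}{2}-\frac{1}{r})$, will be obtained by standard scaling/Knapp examples and a straightforward geometric dyadic sum, just as for the cone in \cite{BBGL}. The entire novelty is the critical endpoint $\lambda = \lambda^* = \frac{1}{q}+\frac{d}{2r}-\frac{d+2}{4}$ in the region $\frac{1}{q} > \frac{d}{2}(\frac{1}{2}-\frac{1}{r})$.

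First I would define the dyadic analogue of $\mathcal{C}_\delta$ by $\mathcal{F}(\mathcal{S}_\delta g)(\xi,\tau) = \psi(\delta^{-1}(\tau-|\xi|^2))\phi(\xi,\tau)\widehat{g}(\xi,\tau)$ with $\psi$ as in \eqref{e:monodecomp}, and reduce the critical bound on $\tilde{\mathcal{S}}^{\lambda^*}$ to the dyadic sum estimate $\|\sum_{k \geq k_0} 2^{-k\lambda^*}\mathcal{S}_{2^{-k}} g\|_{L^q_tL^r_x} \lesssim \|g\|_{L^2}$. The single-scale building block $\|\mathcal{S}_\delta g\|_{L^q_tL^r_x} \lesssim \delta^{-\lambda^*(q,r)}\|g\|_{L^2}$ on the admissible range $\frac{1}{q} \geq \frac{d}{2}(\frac{1}{2}-\frac{1}{r})$ (excluding the forbidden endpoint $(d,q,r)=(2,2,\infty)$) would be obtained by the same Fourier-inversion trick as in Proposition \ref{p:kdeltaest}: writing
\[
\mathcal{S}_\delta g(x,t) = c\!\int_{\mathbb{R}} e^{its}\psi(\delta^{-1}s)\, e^{-it\Delta}h_s(x)\,\mathrm{d}s, \qquad \widehat{h_s}(\xi) = \phi(\xi,|\xi|^2+s)\widehat{g}(\xi,|\xi|^2+s),
\]
the Schr\"odinger Strichartz inequality on the admissible line (where $\lambda^*(q,r)=-\tfrac{1}{2}$) followed by Cauchy--Schwarz gives the required $\delta^{1/2}$ gain, which interpolates with the trivial $L^2 \to L^2$ Plancherel bound to cover the remainder of the range.

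With this building block in hand, the dyadic sum at the critical index is produced by the two-part scheme of Subsections \ref{subsection:r2} and \ref{subsection:q2}. For $r=2$, I would establish the localised family $\|\mathcal{S}_\delta g\|_{L^q_tL^2_x} \lesssim \delta^{-(1/q-1/s)}\|g\|_{L^s_tL^2_x}$ for $2 \leq s \leq q$ by the same temporal decomposition $K_\delta = K_{\delta,0} + \sum_{j \geq 1} K_{\delta,j}$ exploiting the Schwartz decay of $\widehat{\psi}$, and then run the Chebyshev/restricted-weak-type argument together with duality and real interpolation. For $q=2$, I would encode the bilinear form of the single-scale bound as a vector-valued boundedness $T = \{\langle \mathcal{S}_k\,\cdot\,,\cdot\rangle\}_k : L^2_tL^{a'}_x \times L^2_tL^{b'}_x \to \ell^\infty_{\beta(a,b)}$ on the square of Schr\"odinger-admissible pairs, and apply Lemma \ref{l:LP} at three vertices exactly as in Subsection \ref{subsection:q2} to upgrade the target to $\ell^1$. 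Complex interpolation of the $r=2$ and $q=2$ critical families then fills out the region $\frac{1}{q} > \frac{d}{2}(\frac{1}{2}-\frac{1}{r})$.

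The main obstacle I expect is the low dimensions $d=1,2$. The Schr\"odinger endpoint Strichartz estimate fails at $(q,r)=(2,\infty)$ for $d=2$, and for $d=1$ the admissible line meets $r=\infty$ at $q=4$ rather than $q=2$, so the $q=2$ bilinear scheme above does not by itself reach the whole critical region. Following the template of Subsection \ref{subsection:q4}, this should be remedied by running an additional bilinear interpolation step centred at a non-endpoint admissible pair---naturally $q=4$ for $d=1$, and an analogous shifted vertex for $d=2$---and then complex-interpolating this extra family with the $r=2$ and $q=2$ families to cover all of $\frac{1}{q} > \frac{d}{2}(\frac{1}{2}-\frac{1}{r})$.
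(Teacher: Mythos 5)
Your proposal follows essentially the same route as the paper: the paper's proof of Theorem~\ref{t:schrodinger-local} is literally a pointer to the cone argument in Section~\ref{section:cone} (dyadic decomposition, the single-scale Proposition~\ref{p:sdelta-estimate} via Schr\"odinger Strichartz plus Cauchy--Schwarz, the $r=2$ and $q=2$ critical families, and complex interpolation), with the low-dimensional modifications to the single-scale estimate borrowed from \cite{BBGL}. Your reconstruction of the Fourier-inversion trick, the temporal localisation for $r=2$, the restricted-weak-type/Chebyshev step, and the Keel--Tao-style bilinear real interpolation for $q=2$ all match.

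There is, however, one inaccuracy in your discussion of the low-dimensional cases. The correct correspondence between wave and Schr\"odinger is a shift by one in $d$: the cone admissibility line $\frac{1}{q} = \frac{d-1}{2}(\frac{1}{2}-\frac{1}{r})$ matches the Schr\"odinger line $\frac{1}{q} = \frac{d}{2}(\frac{1}{2}-\frac{1}{r})$ upon $d \mapsto d-1$. Consequently, Schr\"odinger $d=1$ is the analogue of cone $d=2$ and genuinely requires the extra $q=4$ bilinear family (the admissible line exits the square at $(q,r)=(4,\infty)$, so the $r=2$ and $q=2$ segments fail to convexify the whole region $\frac{1}{q}>\frac12(\frac12-\frac1r)$). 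But Schr\"odinger $d=2$ is the analogue of cone $d=3$, and there is \emph{no} need for an ``analogous shifted vertex.'' The failure of the Keel--Tao endpoint at $(q,r,d)=(2,\infty,2)$ only forces one to restrict the bilinear vertex set to the half-open square (the analogue of $\mathfrak{S}_3 = (0,\frac12]^2$), and the bilinear interpolation already produces estimates in the interior, which upon complex interpolation with the $r=2$ family fills out all of $\frac{1}{q} > \frac{2}{2}(\frac12-\frac1r)$. In short: the extra step is needed exactly for $d=1$ and for no other $d$; for $d=2$ the Keel--Tao failure is handled the same way the paper handles cone $d=3$, with no further interpolation step required.
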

Once Theorem \ref{t:schrodinger-local} is obtained, a scaling argument yields Theorem \ref{t:schrodinger-global}. To establish the critical case $\lambda = \lambda^*$ in the region $\frac{1}{q} > \frac{d}{2}(\frac{1}{2} - \frac{1}{r})$, we can follow the structure of the argument above in Section \ref{section:cone}, and the remaining estimates in non-critical cases can be proved following the argument in \cite{BBGL}; thus, we present only an outline here.
\begin{proof}[Sketch of proof of Theorem \ref{t:schrodinger-local}]  The necessary conditions can be shown in exactly the same way as in \cite[Section 5.2]{BBGL}. For sufficiency, we consider $\mathcal S_\delta$ defined by 
\[ 
\mathcal F(\mathcal S_\delta g)(\xi,\tau) =  \psi\bigg(\frac{|\xi|^2 - \tau}\delta\bigg) \phi(\xi,\tau)\, \widehat g(\xi,\tau),
\]
where $\psi \in C^\infty_c(\mathbb{R})$ is supported in $[\frac{1}{2},2]$. The key estimates are contained in the following proposition.
\begin{proposition}\label{p:sdelta-estimate} Let $0<\delta\ll1$. Then
\begin{equation*}\label{e:sdelta-estimate}
\|\mathcal S_\delta  g\|_{L^q_tL^r_x} \lesssim \delta^{-\lambda^*(q,r)}\|g\|_{L^2}
\end{equation*}
holds whenever $d\geq 1, q,r\in[2,\infty],\frac{1}{q} \geq \frac{d}{2}(\frac{1}{2} - \frac{1}{r})$ and $(q,r,d) \neq (2,\infty,2)$. 
\end{proposition}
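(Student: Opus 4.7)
The strategy is to mimic the proof of Proposition \ref{p:kdeltaest} with the half-wave propagator replaced by the Schr\"odinger propagator $e^{it\Delta}$. I would first establish two extremal estimates and then fill the admissible region by complex interpolation.

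At the corner $(q,r) = (2,2)$, one has $\lambda^*(q,r) = 0$, and Plancherel's theorem immediately yields $\|\mathcal{S}_\delta g\|_{L^2_{t,x}} \lesssim \|g\|_{L^2}$ from the uniform boundedness of the multiplier. On the Schr\"odinger admissible line $\frac{1}{q} = \frac{d}{2}(\frac{1}{2}-\frac{1}{r})$, one has $\lambda^*(q,r) = -\frac{1}{2}$, and the goal is $\|\mathcal{S}_\delta g\|_{L^q_tL^r_x} \lesssim \delta^{1/2}\|g\|_{L^2}$. To prove this, I would substitute $s = |\xi|^2 - \tau$ in the Fourier inversion formula for $\mathcal{S}_\delta g$ to obtain an expression of the form
\[
\mathcal{S}_\delta g(x,t) = c\int_{\mathbb R} e^{-ist}\psi(s/\delta)\,[e^{\mp it\Delta}h_s](x)\,\mathrm{d}s,\qquad \widehat{h_s}(\xi) := \phi(\xi,|\xi|^2-s)\,\widehat{g}(\xi,|\xi|^2-s),
\]
and then apply Minkowski's inequality in time, the Keel--Tao Strichartz estimate $\|e^{\mp it\Delta}h_s\|_{L^q_tL^r_x} \lesssim \|h_s\|_{L^2_x}$ (valid at every Schr\"odinger admissible pair except the two-dimensional endpoint $(q,r,d) = (2,\infty,2)$, see \cite{KeelTao}), and finally Cauchy--Schwarz in $s$ using $\|\psi(\cdot/\delta)\|_{L^2}\lesssim\delta^{1/2}$ together with $\int\|h_s\|_{L^2_x}^2\,\mathrm{d}s\lesssim\|g\|_{L^2}^2$ from Plancherel in the original variables.

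The two extremal bounds are then combined via complex interpolation. Any admissible pair $(1/r,1/q)$ with $\frac{1}{q}\geq\frac{d}{2}(\frac{1}{2}-\frac{1}{r})$ and $q,r\in[2,\infty]$ lies on a line segment joining $(\tfrac{1}{2},\tfrac{1}{2})$ to some Strichartz admissible pair, and interpolating between $\delta^{0}$ at one end and $\delta^{1/2}$ at the other produces exactly the power $\delta^{-\lambda^*(q,r)}$; this follows from a direct arithmetic check, since $\lambda_\circ$ is affine on such a segment and equals $-\theta/2$ when parameterised by $\theta\in[0,1]$, so that $\lambda^*=\lambda_\circ \geq -\tfrac12$ throughout. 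The one genuine obstacle is the failure of the two-dimensional Schr\"odinger endpoint at $(q,r,d) = (2,\infty,2)$, paralleling the dimension-dependent difficulties for the cone in Proposition \ref{p:kdeltaest}; since the proposition explicitly excludes this case, no separate treatment is required and the scheme above goes through without modification.
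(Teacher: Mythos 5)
Your derivation of the Strichartz-line estimate is sound and correctly transplants the cone argument to $e^{it\Delta}$, and the interpolation arithmetic is right: along a segment joining $(1/2,1/2)$ to a point on the Schr\"odinger admissible line, $\lambda_\circ$ interpolates linearly from $0$ to $-1/2$, matching the interpolated $\delta$-power. The gap is in the claim that every admissible pair $(1/r,1/q)$ with $\frac{1}{q}\geq\frac{d}{2}(\frac{1}{2}-\frac{1}{r})$ lies on such a segment. This is true for $d\geq 3$: there the admissible region in the $(1/r,1/q)$-square is precisely the triangle with vertex $(1/2,1/2)$ and the Strichartz segment from $(1/2,0)$ to $(\frac{d-2}{2d},1/2)$ as the opposite edge, so every ray from the vertex into the region terminates on the line and the sweep is exhaustive.

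For $d=1,2$ the admissible region is strictly larger than that triangle and your scheme does not reach all of it. For $d=2$, the points $(1/r,1/2)$ with $1/r\in(0,1/2)$, i.e.\ $q=2$, $r\in(2,\infty)$, lie on the horizontal ray out of $(1/2,1/2)$, which meets the Strichartz line $\frac{1}{q}=\frac{1}{2}-\frac{1}{r}$ only at the forbidden endpoint $(0,1/2)$; the failure of that endpoint thus contaminates the whole edge $q=2$ of the region, not just the one excluded triple $(q,r,d)=(2,\infty,2)$. For $d=1$ the obstruction is purely geometric: the Strichartz line only reaches $(0,1/4)$, so the triangle with vertices $(1/2,1/2)$, $(0,1/4)$, $(0,1/2)$ (containing, e.g., $(q,r)=(2,\infty)$ with $\lambda^*=-1/4$) sits in the admissible region yet no ray from $(1/2,1/2)$ through it lands on the Strichartz line inside the square. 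These are exactly the ranges for which the paper, in the wave analogue Proposition~\ref{p:kdeltaest}, says that ``further arguments are required'' when $d=2,3$ (the cone counterparts of Schr\"odinger $d=1,2$) and defers to \cite[Section 5]{BBGL}. Your proof needs to import those additional arguments; merely excluding $(2,\infty,2)$ does not allow the two-estimate interpolation scheme to ``go through without modification.''
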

One can prove Proposition \ref{p:sdelta-estimate} by following the proof of Proposition \ref{p:kdeltaest}; for $d \geq 3$ this simply means making use of the Strichartz estimates for the Schr\"odinger operator $e^{it\Delta}$ rather than the wave operator (for $d=1,2$, one should follow the additional arguments given in \cite[Section 5]{BBGL}).

Once Proposition \ref{p:sdelta-estimate} is established, the rest of argument leading to Theorem \ref{t:schrodinger-local} is identical to that of the wave case in Section \ref{section:cone} and we omit the details.
\end{proof} 

\begin{acknowledgements}
This work was supported by JSPS Grant-in-Aid for Young Scientists A no. 16H05995 (Bez, Cunanan), JSPS Grant-in-Aid for Challenging Exploratory Research no. 16K13771-01 (Bez), and NRF Republic of Korea no. NRF-2015R1A4A1041675 (Lee). The authors would also like to express their gratitude to Jon Bennett and Susana Guti\'errez for discussions which formed the foundation for this work. 
\end{acknowledgements}

\end{document}